\pgfplotsset{compat=1.18}
\newcommand{\pdfrac}[2]{\frac{\partial #1}{\partial #2}}
\newcommand{\R}{\mathbb{R}}
\newcommand{\Hk}[1]{H^{#1}(\Omega)}
\newcommand{\Hkrad}[1]{H_{\mathrm{rad}}^{#1}(\Omega)}
\newcommand{\Lrad}{L_{\mathrm{rad}}^2(\Omega)}
\newcommand{\PV}{\textrm{P.V.}}
\newcommand{\res}[2]{\mathop{\mathrm{Res}}_{#1 = #2}}
\theoremstyle{remark}
\newtheorem{claim}{Claim}
\newtheorem{remark}{Remark}
\theoremstyle{plain} 
\theoremstyle{definition} 
\newtheorem{definition}{Definition}
\DeclareMathOperator*{\argmin}{argmin}
\tikzset{
  ->-/.style={postaction={decorate},
              decoration={markings, mark=at position 0.5 with {\arrow{>}}}},
}
\tikzset{
  ->--/.style={dashed, postaction={decorate},
              decoration={markings, mark=at position 0.5 with {\arrow{>}}}},
}
\tikzset{
  -<-/.style={postaction={decorate},
              decoration={markings, mark=at position 0.5 with {\arrow{<}}}}
}
\title{Dynamic Response of a Finite Circular Plate on an Elastic Half-Space Using the Truncated Lamb Kernel}
\date{\today}
\author[1]{Greyson Meares \thanks{gcm00010@mix.wvu.edu}}
\author[2]{Sage Meiling \thanks{shm00001@mix.wvu.edu}}
\author[2]{Charis Tsikkou \thanks{tsikkou@math.wvu.edu}}
\affil[1]{Lane Department of Computer Science and Electrical Engineering, West Virginia University, Morgantown, WV 26506, USA}
\affil[2]{School of Mathematical and Data Sciences, West Virginia University, Morgantown, WV 26506, USA}
\begin{document}
\maketitle
\begin{abstract}\noindent
We develop an exact operator formulation for the dynamic interaction between a finite circular elastic plate and an elastic half–space.  Classical analyses of this system, beginning with Lamb’s representation of the half–space response, assume an infinite plate and rely on the diagonalization of the soil operator by the continuous Hankel transform. However, for a plate of finite radius $R$, the applied traction and resulting displacement are supported only on $0\le r\le R$, thus the governing operator is the spatially truncated Lamb operator
\[
  \mathcal{M}(\omega)=\chi_{[0,R]}\,T(\omega)\,\chi_{[0,R]},
\]
where $T(\omega)$ is the standard Hankel multiplier involving the Rayleigh 
denominator $\Omega(\xi,\omega)$. Truncation destroys the diagonal structure of $T(\omega)$ and introduces real–axis singularities due to the Rayleigh pole, in addition to the square–root branch points at $\xi=k_T$ and $\xi=k_L$. We express the action of $\mathcal{M}(\omega)$ on a finite-disk Bessel basis $\{\phi_n(r)=A_{1,n}J_0(\lambda_n r)+A_{2,n}I_0(\lambda_n r)\}$, which satisfies the free-edge boundary conditions of the plate and obtain formulas for the resulting matrix elements, which involve integrals of the Lamb kernel. These are evaluated as Cauchy principal values, with contributions from residues representing radiation damping in the half–space. The resulting matrix is dense, meaning that its modal matrix on the basis of Bessel finite-disks has nonzero entries for almost all pairs of indices $(m,n)$, but is spectrally convergent. Inversion of this matrix provides a complete solution for a finite radius in the frequency domain. Our analysis reproduces Chen et al.'s \cite{Chen1988} classical finite-radius experiment for small $R$, and our solution approaches the infinite–radius solution for large $R$, while also quantifying the finite–radius corrections. To the best of our knowledge, this work provides the first exact operator-level treatment of the finite–radius plate–half–space interaction that retains the full Lamb kernel and its nonlocal structure.
\end{abstract}

\vspace{5mm}

\noindent
{\bf Keywords.} Plate–Half-Space Interaction; Elastic Half-Space; Truncated Lamb Operator; Hankel Transform; Rayleigh Waves; Modal Coupling; Spectral Quadrature; Singular Integrals; Finite-Disk Bessel Basis.
\\

\vspace{5mm}

\noindent
{\bf AMS Subject Classifications.} Primary: 74J05, 74B05, 45E10; Secondary: 74K20, 35P05, 45P05, 65T40.

\section{Introduction}
The dynamic interaction between elastic structures and deformable foundations is a well-known problem with important applications in impact mechanics, 
soil–structure interaction, seismology, and nondestructive testing. Lamb~\cite{Lamb1904} established the response of an elastic half-space to 
normal surface loading and demonstrated that the associated frequency-domain Green's function includes the characteristic square-root factors
\[
  \sqrt{\xi^{2}-k_{T}^{2}}
  \qquad\text{and}\qquad
  \sqrt{\xi^{2}-k_{L}^{2}},
\]
where $k_{T}=\omega/C_{T}$ and $k_{L}=\omega/C_{L}$ are the shear- and compressional-wave numbers. These terms become purely imaginary for 
$\xi < k_{T}$ or $\xi < k_{L}$, corresponding to evanescent shear and 
compressional fields, and become real for $\xi > k_{T}$ or $\xi > k_{L}$, indicating the beginning of propagating waves in the underlying elastic 
half-space. Therefore, the points
\[
  \xi = k_{T}=\omega/C_{T},
  \qquad 
  \xi = k_{L}=\omega/C_{L},
\]
are genuine square-root branch points of the spectral response of the 
half-space. In Lamb’s formulation, the axial displacement under an applied axisymmetric surface traction is expressed in the Hankel and Fourier domains as
\begin{equation}\label{eq1}
  \widehat{w}^{\,0}(\xi,\omega)
  = \alpha_{\mathrm{HS}}(\xi,\omega)\,\widehat{q}^{\,0}(\xi,\omega),
\end{equation}
where
\begin{equation}
\label{eq2}
  \alpha_{\mathrm{HS}}(\xi,\omega)
  = -\frac{\alpha(\xi,\omega)\,k_{T}^{2}}{\mu\,\Omega(\xi,\omega)},
  \qquad
  \alpha(\xi,\omega)=\sqrt{\xi^{2}-k_{L}^{2}},
\end{equation} and 
\begin{equation}\label{eq3}
  \Omega(\xi,\omega)
  = \left( 2\xi^{2} - k_{T}^{2} \right)^{2}
      - 4 \xi^{2} 
        \sqrt{\xi^{2}-k_{T}^{2}}\,
        \sqrt{\xi^{2}-k_{L}^{2}}
\end{equation}
is the Rayleigh denominator. Our notation follows standard conventions in the plate-half-space literature (see, e.g.,~\cite{Chen1988}): $w(r,t)$ denotes the normal surface
displacement, $q(r,t)$ the interfacial normal traction, and $\omega$ the angular frequency. We note that  $\alpha=\sqrt{\xi^{2}-k_{L}^{2}}$, in Chen et al., denotes the longitudinal depth-decay wavenumber, which appears as a prefactor in the surface admittance $\alpha_{\mathrm{HS}}$, while $\Omega(\xi,\omega)$ carries the Rayleigh pole and branch-point structure.

Throughout the paper, we write $\widehat{f}$ for the (Cartesian) Fourier 
transform of $f$, and we use the superscript ${}^0$ to denote the Hankel transform of zeroth-order in radial coordinate. Thus,
\[
  \widehat{f}(\omega)
  = \int_{-\infty}^{\infty} f(t)\,e^{i\omega t}\,dt,
  \qquad
  (\mathscr{H}_0f)(\xi) = f^{0}(\xi)
  = \int_{0}^{\infty} f(r)\,J_{0}(\xi r)\,r\,dr.
\]
The cusps observed when plotting $\Omega(\xi,\omega)$ along the real axis 
are the direct manifestation of the branch points at $\xi=k_{T}$ and $\xi=k_{L}$. Therefore, all nonlocal, dispersive and radiative features of the half-space response are encoded in the multiplier 
$\alpha_{\mathrm{HS}}(\xi,\omega)$.

We emphasize that in the axisymmetric setting, the radial wavenumber $\xi$ is inherently nonnegative. Unlike Cartesian Fourier variables, the zeroth-order Hankel transform is defined for $ \xi \in [0,\infty),$ so negative wavenumbers do not arise. Although the analytic continuation of $\Omega(\xi,\omega)$ has symmetric branch points at $\xi = \pm k_{T}$ and $\xi = \pm k_{L}$, only positive branch points are relevant for the physical problem and for the integral representations that appear in the truncated operator to be introduced below. Consequently, the principal value integrals and Rayleigh-type poles encountered in the finite-radius formulation arise solely from the positive real axis.

To formulate the finite-radius problem, we recall that in the frequency domain the response of the elastic half-space response is governed by an operator $T(\omega)$, which maps an axisymmetric normal traction $q(r,\omega)$ to the corresponding displacement $w(r,\omega)$ through the Lamb kernel. Although $T(\omega)$ admits diagonalization by the Hankel transform in $[0,\infty)$, this property is lost once the interaction is restricted to a finite disk. Since only $0\le r\le R$ is in contact with the half-space, we introduce the characteristic function  
\[
  \chi_{[0,R]}(r)
  =
  \begin{cases}
    1, & 0\le r\le R,\\[4pt]
    0, & r>R,
  \end{cases}
\]
the radial characteristic function of the disk, and define the physically relevant truncated operator
\[
  \mathcal{M}(\omega) 
  = \chi_{[0,R]}\, T(\omega)\, \chi_{[0,R]},
\]
which maps tractions (normal surface force densities) supported on $[0,R]$ to displacements supported on the same interval. This operator is the central object of the finite-radius formulation developed in the present work.

A particularly influential application of the untruncated operator $T(\omega)$ appears in the work of Chen et al.~\cite{Chen1988}, who analyzed the low-velocity impact of a thin elastic plate resting on an elastic half-space. By assuming that the plate extends across the entire surface (the infinite-radius approximation), 
Chen et al. were able to diagonalize $T(\omega)$ using the continuous Hankel transform, leading to a closed-form frequency-domain solution. This assumption plays a crucial role: placing both the displacement and the traction on $[0,\infty)$, so that the soil operator becomes a simple Hankel multiplier.

In practice, many physical systems involve structural components in contact with the ground that have a \emph{finite radius}. Typical examples include circular foundations, impact plates, sensors, load cells, and test fixtures. For finite plate geometries, the infinite-domain assumption is broken down. Since both the plate deflection and applied traction are supported only in $0 \le r \le R$, the truncated operator $\mathcal{M}(\omega)=\chi_{[0,R]}T(\omega)\chi_{[0,R]}$ is not diagonalizable by the Hankel transform, and the analytic simplifications used in the infinite-radius theory by Chen et al.~\cite{Chen1988} are no longer available. Despite the extensive literature on plates on elastic foundations, a rigorous formulation of the \emph{finite-radius} plate–half-space problem using the exact Lamb kernel does not appear to exist.

A natural point of comparison is the work of Schmidt and 
Krenk~\cite{SchmidtKrenk1982}, who studied the asymmetric vibrations of a circular plate of finite radius in an elastic half–space using the exact Lamb half–space solution.  Their mechanical model differs from the present one in that the plate is described by Mindlin’s first–order shear deformation theory and thus incorporates transverse–shear deformation and rotary inertia, whereas the present study adopts the classical Kirchhoff-Love model. On the mathematical side, Schmidt and Krenk formulate a direct integral equation on $0\le r\le R$ in which plate displacement and contact traction are unknown.  The half–space reaction is represented through an influence kernel derived from Bycroft’s version of Lamb’s solution, and the resulting first–kind integral equation is solved by expanding the contact stress in Popov’s polynomial basis and reducing the problem to a finite 
algebraic system for the expansion coefficients.

We study the finite-radius setting from an operator perspective by working with the truncated Lamb operator
$\mathcal{M}(\omega)=\chi_{[0,R]}T(\omega)\chi_{[0,R]},$
which acts on radial functions supported on the disk $0\le r\le R$. We regard $\mathcal{M}(\omega)$ as an operator on the Hilbert space $L^2(0,R)$ endowed with the standard radial inner product $\langle f,g\rangle=\int_0^R f(r)\,g(r)\,r\,dr.$
While Schmidt and Krenk already capture the finite–radius geometry and solve directly for contact traction, their analysis does not exploit the spectral properties of the truncated Lamb operator that play a central role in the present approach. Here, we develop the first exact operator–level formulation of the finite–radius problem in the framework of Lamb’s kernel. We expand all fields on the disk in the complete Bessel eigenbasis
$\{\phi_{n}(r)= A_{1,n}J_0(\lambda_n r)+A_{2,n}I_0(\lambda_n r)\}$,
which satisfies the boundary conditions 
\begin{equation}
\label{eqbc}
\begin{aligned}
\frac{\partial}{\partial r}\left(\frac{\partial^2 \widehat{w}}{\partial r^2}+\frac{1}{r} \cdot \frac{\partial \widehat{w}}{\partial r}\right)&\biggr|_{r=R}=0,\\
\left(\frac{\partial^2 \widehat{w}}{\partial r^2}+\nu_p \frac{1}{r}\cdot \frac{\partial \widehat{w}}{\partial r}\right)&\biggr|_{r=R}=0,
\end{aligned}
\end{equation} where $\nu_p$ denotes the Poisson ratio of the circular elastic plate. The eigenvalues $\lambda_{n}$ and coefficients $A_{1,n},A_{2,n}$, will be derived in Section~\ref{subsec:eigenfunctions}. On this basis, the truncated operator $\mathcal{M}(\omega)$ admits a dense matrix representation whose entries involve integrals of the Lamb kernel containing the Rayleigh–wave singularities. These integrals are interpreted in the sense of Cauchy principal values, with explicit residue terms that account for radiation damping into the elastic half-space.

Our analysis highlights a fundamental distinction between the finite- and infinite-radius settings. In the infinite case, $T(\omega)$ acts as a diagonal multiplier in Hankel space, whereas for finite radius, the truncated operator $\mathcal{M}(\omega)$ is fully nonlocal and couples all Bessel modes. Our analysis reproduces Chen et al.'s finite-radius experiment for small $R$, and we observe convergence to the infinite-radius model as $R$ becomes large, while quantifying the finite-radius corrections. The contributions of this paper are:
\begin{enumerate}
  \item \emph{Exact finite–radius operator formulation:} construction of the 
        truncated Lamb operator 
        $$\mathcal{M}(\omega)=\chi_{[0,R]}T(\omega)\chi_{[0,R]}$$ 
        and its modal representation.
  \item \emph{Finite–disk Bessel expansion:} a spectrally accurate basis for 
        representing all plate fields on $[0,R]$ satisfying boundary 
        conditions.
  \item \emph{Principal–value integrals and Rayleigh poles:} rigorous evaluation 
        of the $\xi$ -integrals that define $\mathcal{M}(\omega)$, including the Cauchy principal 
        values and the contributions of residues associated with radiation damping.
  \item \emph{Numerical inversion of the truncated operator:} stable computation of a well-conditioned modal matrix representation of $\mathcal{M}(\omega)$ which is then inverted numerically.
  \item \emph{Comparison with the infinite–plate model:} convergence to Chen et al.’s solution as $R\to\infty$ and quantitative characterization of all finite–radius corrections.
\end{enumerate}

The paper is organized as follows. In Section~2, we present the governing equations for the plate and the elastic half-space. Section~3 introduces the finite-disk Bessel basis 
and constructs the truncated Lamb operator. Section~4 analyzes the principal value structure and the Rayleigh poles. Section~5 develops the modal system and the inversion procedure. Section~6 contains numerical examples in comparison with infinite-plate results. Finally, in Section~7 we summarize our main results and suggest avenues for future investigation.

\section{Governing Equations}
We consider a thin, axisymmetric circular plate of radius $R$ and thickness $h$ that is in continuous contact with the surface $z=0$ of a linearly elastic half–space occupying $\{z<0\}$. The displacement of the plate is denoted by $w(r,t)$ and by the continuous contact assumption is equal to the displacement of the soil in the contact region. The normal traction exerted by the plate in the half–space is denoted by $q(r,t)$ and is continuous for $r \in (0, R)$ and zero for $r>R$. Following Chen et al.~\cite{Chen1988}, we adopt the Kirchhoff-Love model with small strain for the plate and the classical Lamb formulation for the half–space. 

\subsection{Plate Equation}\label{subsec:plate_equation}

Under a centered impact force $p(t)$ and the unknown half-space reaction $q(r,t)$, the axisymmetric transverse deflection satisfies
\begin{equation}\label{eq:plate_time}
D\,\Delta^2 w(r,t) + \rho h\,\pdfrac{^2w}{t^2}
= \frac{p(t)}{2\pi r}\delta(r) - q(r,t), \qquad 0\le r\le R,\; t>0,
\end{equation}
where $\Delta=\partial_{rr}+\frac{1}{r}\partial_r$ is the radial Laplacian and
$D=\frac{E h^3}{12(1-\nu_p^2)}$ is the flexural rigidity.
The plate is free at $r=R$, so $w$ satisfies the standard free-edge conditions \eqref{eqbc}, together with regularity at $r=0$.

Taking the Fourier transform in time yields the frequency-domain plate equation
\begin{equation}\label{eq:plate_freq}
\Big(D\,\Delta^2 - \rho h\,\omega^2\Big)\widehat w(r,\omega) + \widehat q(r,\omega)
= \frac{\widehat p(\omega)}{2\pi r}\delta(r), \qquad 0\le r\le R.
\end{equation}
The remaining relation between $\widehat q$ and $\widehat w$ is provided by Lamb's half-space solution.

\subsection{Half–Space Equation}
\label{sec:HS}
The response of the elastic half-space to a normal surface traction is described by Lamb’s classical solution. Evaluated at the surface $z=0$, the Hankel-transformed surface displacement and traction satisfy
\begin{equation}
\label{eq:LambSurface}
  \widehat{w}^{\,0}(\xi,\omega)
  = \alpha_{\mathrm{HS}}(\xi,\omega)\,\widehat{q}^{\,0}(\xi,\omega),
\end{equation}
where $\alpha_{\mathrm{HS}}(\xi,\omega)$ is the half-space admittance given by~\eqref{eq2}; see Lamb~\cite{Lamb1904} and, for example, Chen et al.~\cite{Chen1988}. $\alpha_{HS}$ is the fundamental solution of the half-space in the normal direction.

The square–root branch points occur at $\xi=k_{L}=\omega/C_{L}$ and $\xi=k_{T}=\omega/C_{T}$, separating the evanescent and propagating regimes of the compressional and shear fields.

\medskip\noindent
\textbf{Remark on sign conventions.}
We follow the expression of Chen et al. in~\cite{Chen1988}, which contains an overall minus sign on the right-hand side of (11). This sign reflects Chen et al.’s convention that positive traction acts \emph{downward} in the half–space while positive displacement is taken \emph{upward}.  

\subsection{Infinite-Domain Soil and the Finite-Radius Surface Operators}
Combining the plate equation with the Lamb relation 
\eqref{eq:LambSurface} shows that the interaction between the plate and the half–space is fully captured by the soil compliance operator
\[
  T(\omega):\widehat{q}(r,\omega)\mapsto \widehat{w}(r,\omega),
\]
which maps a prescribed normal traction $\widehat{q}(\cdot,\omega)$ on the surface 
to the corresponding normal displacement $\widehat{w}(\cdot,\omega)$.  
In the Hankel–frequency domain, this operator is diagonal:
\begin{equation}
\label{eq:T-Hankel}
  \widehat{w}^{\,0}(\xi,\omega)
  = \alpha_{\mathrm{HS}}(\xi,\omega)\,\widehat{q}^{\,0}(\xi,\omega).
\end{equation}
Equivalently, in physical space, we may write
\begin{equation}
\label{eq:T-realspace}
  \widehat{w}(r,\omega)
  = (T(\omega)\widehat{q})(r)
  = \int_{0}^{\infty} K_{\omega}(r,s)\,\widehat{q}(s,\omega)\,s\,ds,
\end{equation}
where the integral kernel $K_{\omega}$ is defined by
\begin{equation}
\label{eq:Kernel-def}
  K_{\omega}(r,s)
  = \int_{0}^{\infty} 
      \alpha_{\mathrm{HS}}(\xi,\omega)\,
      J_{0}(\xi r)J_{0}(\xi s)\,\xi\,d\xi.
\end{equation}
Thus $T(\omega)$ is a nonlocal integral operator on the radial half–line $0\le r<\infty$, whose diagonalization by the Hankel transform is given by 
\eqref{eq:T-Hankel}.
For a plate of finite radius $R$, the applied traction is supported on $0\le r\le R$, and the physically relevant operator is the truncation of $T(\omega)$ to the disk:
\begin{equation}
\label{eq:A-def}
  \mathcal{M}(\omega)
  = \chi_{[0,R]}\,T(\omega)\,\chi_{[0,R]}.
\end{equation}
Equivalently,
\begin{equation}
\label{eq:A-realspace}
  (\mathcal{M}(\omega)\widehat{q})(r)
  =\chi_{[0,R]}(r) \int_{0}^{R} K_{\omega}(r,s)\,\widehat{q}(s,\omega)\,s\,ds,
\end{equation}
so $\mathcal{M}(\omega)$ is an integral operator in $[0,R]$ with a kernel obtained by restricting $T(\omega)$ to the finite domain.

From an operator-theoretic perspective, the Lamb half-space operator $T(\omega)$ may be viewed as a frequency-dependent Neumann-to-Dirichlet map for the elastic half-space.  In the frequency domain, for a prescribed axisymmetric normal surface traction $\widehat q(r,\omega)=\widehat{\sigma}_{zz}(r,0,\omega)$ on the boundary $z=0$, the operator $T(\omega)$ returns the corresponding normal displacement $\widehat w(r,\omega)$. Here, $\sigma_{zz}$ denotes the normal component of the Cauchy stress tensor.

In the infinite domain setting, this operator acts on functions defined on the half–line $[0,\infty)$ and admits a diagonal representation under the Hankel transform via the Lamb surface relation. In the finite–radius configuration studied here, both plate displacement and applied traction are confined to the disk $0\le r\le R$, and interaction with the half–space is restricted to this bounded region. The physically relevant operator is therefore the truncated Neumann–to–Dirichlet map $\mathcal{M}(\omega)=\chi_{[0,R]}T(\omega)\chi_{[0,R]},$
which maps the tractions supported on the disk to the displacements supported on it.

Since truncation breaks Hankel diagonalization, the operator
$\mathcal{M}(\omega)$ is nonlocal on $[0,R]$: the displacement at a given radius depends on the traction over the entire disk.  As a consequence, the Galerkin matrix associated with $\mathcal{M}(\omega)$ exhibits global mode coupling and has no sparsity structure; in particular, its matrix representation on a modal basis is, in general, dense.

\section{Finite-Disk Bessel Eigenfunctions}
Due to the finite radius of the plate, the unknown deflection $w$ is naturally posed on the disk
$r\in[0,R]$ and must satisfy the free-edge conditions at $r=R$ together with regularity at the origin. Many numerical formulations enforce boundary conditions only indirectly, for example, through Lagrange multipliers, penalty terms, or weak
imposition in a finite–element setting. By contrast, our spectral expansion uses eigenfunctions that exactly satisfy the boundary conditions. As a result, every truncated approximation automatically inherits the correct behavior at both $r=0$ and $r=R$, without auxiliary constraints.


Specifically, we use the eigenmodes of the axisymmetric radial
biharmonic operator along with the free-edge conditions. These modes are the standard ``circular plate modes'': they form
a natural basis for Galerkin discretization and diagonalize the plate stiffness term. In addition, in the radial geometry the eigenmodes admit Bessel-type closed forms, which
will be useful later when expressing the half-space contribution.

\subsection{Eigenfunctions on \texorpdfstring{$[0,R]$}{[0,R]}}
\label{subsec:eigenfunctions}
We represent the deflection on the finite interval as the series $\widehat{w}(r, \omega) = \sum_{n=1}^\infty a_n(\omega) \phi_n(r)$,
where $\left\{\phi_n\right\}$ are the solutions to the eigenvalue problem
along with boundary conditions~\eqref{eqbc}
\[
\Delta^2 \phi_n = \lambda_n^4 \phi_n \qquad \text{for } \qquad 0<r<R,
\]
With $\Delta = \partial_{rr} + \tfrac{1}{r}\partial_r$ being the radial, axisymmetric Laplacian. These modes have two key properties: (i) they satisfy the free-edge
conditions at $r=R$ together with regularity at the origin ($\phi_n'(0) = 0$ and $\phi_n(0)$ finite) by construction,
and (ii) they admit explicit Bessel-type representations. A derivation can be found in ~\cite{asmar}:
\[
\phi_n(r) = A_{1n} J_0(\lambda_n r) + A_{2n} I_0(\lambda_n r),
\]
The latter property plays a crucial analytical role in the formulation.  Since the half-space operator is most naturally expressed in terms of a truncated Hankel transform, the coupling involves integrals of the form
\[
\int_0^R \phi_n(r) J_0(\xi r)\, r\,dr
\]
and their asymptotic behavior as $\xi\to\infty$. The Bessel structure of the biharmonic eigenmodes allows these Hankel-type integrals to be evaluated in closed form using well-known Bessel function identities. The constants $\lambda_n, A_{1n}, A_{2n}$ are determined solely by the boundary conditions. 
Plugging the eigenfunctions into the free-edge conditions yields the system which determines $A_{1n}, A_{2n}$:
\[
\begin{bmatrix}
    c_{1n} & c_{2n} \\
    c_{3n} & c_{4n}
\end{bmatrix}
\begin{bmatrix}
    A_{1n} \\
    A_{2n}
\end{bmatrix} = 0,
\]
where
\begin{align*}
    c_{1n} &= \lambda_n^3 J_0'''(\lambda_n R) + \frac{\lambda_n^2}{R} J_0''(\lambda_n R) - \frac{\lambda_n}{R^2}J_0'(\lambda_n R) \\
    c_{2n} &= \lambda_n^3 I_0'''(\lambda_n R) + \frac{\lambda_n^2}{R} I_0''(\lambda_n R) - \frac{\lambda_n}{R^2}I_0'(\lambda_n R) \\
    c_{3n} &= \lambda_n^2 J_0''(\lambda_n R) + \frac{\nu_p \lambda_n}{R} J_0'(\lambda_n R) \\
    c_{4n} &= \lambda_n^2 I_0''(\lambda_n R) + \frac{\nu_p \lambda_n}{R} I_0'(\lambda_n R)
\end{align*}
Here $J_0'(\lambda_n R)$ and $I_0'(\lambda_n R)$ denote derivatives of the Bessel functions and modified Bessel functions with respect to their argument, evaluated at $\lambda_n R$. 
For nontrivial solutions, the determinant of the coefficient matrix must be equal to zero, resulting in the characteristic equation for $\lambda_n$:
\[
    c_{1n} c_{4n} - c_{2n} c_{3n} = 0.
\]
The solutions to this characteristic equation yield the countable set of real eigenvalues.

\subsection{Orthogonality, Completeness, and Normalization}

We will repeatedly project functions onto the basis $\{\phi_n\}$ and evaluate integrals involving products $\phi_m\phi_n$. For this reason, we note the completeness, orthogonality, and normalization properties used
throughout. Our goal is to show that the free-edge eigenmodes defined in the previous section form a complete basis in $L^2_{\mathrm{rad}}(\Omega)$. To this end, we introduce the Kirchhoff-Love plate energy form, construct the
associated self-adjoint operator, show that its resolvent is compact, and then apply the spectral theorem for compact self-adjoint operators. Let 
\[
    \Omega = \{\rm{x} \in \mathbb{R}^2: |\rm{x}| < R \}.
\]
For $u,v\in H^2(\Omega)$, define the symmetric bilinear form,
\begin{align}
    \label{bform_cart}
    &a : H^2(\Omega) \times H^2(\Omega) \to \mathbb{R} \\
    &a(u,v)
    :=\int_\Omega\Big(
    \left(1 - \nu_p\right)(u_{xx}v_{xx} + 2u_{xy}v_{xy} + u_{yy}v_{yy}) 
    + \nu_p (\Delta u)(\Delta v)
    \Big)\,d\textbf{x}
\end{align}
where derivatives are understood in the weak sense. The associated natural boundary conditions are free-edge conditions at $r=R.$ We equip $H^2(\Omega)$ with the standard norm:
\begin{equation}
    \| u \|_{H^2(\Omega)}^2 = \| u \|_{L^2(\Omega)}^2 + \| \nabla u \|_{L^2(\Omega)}^2 + \| \mathcal{D}^2 u \|_{L^2(\Omega)}^2
\end{equation}
where $\mathcal{D}^2u$ denotes the Hessian of $u$ (Note: Physically, $\nu_p$ is Poisson's ratio and typically takes a value between $0$ and $0.5$).
We will first need the following inequality to show the boundedness of the bilinear form.
\begin{claim}
    \[ \| \Delta u \|_{L^2(\Omega)} \leq c \| u \|_{H^2(\Omega)} \]
    for some constant $c>0$ and all $u \in H^2(\Omega)$. 
\end{claim}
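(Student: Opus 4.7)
The plan is to bound $\|\Delta u\|_{L^2(\Omega)}$ directly by the Hessian norm using the triangle (or Cauchy–Schwarz) inequality, and then dominate the Hessian norm by the full $H^2$ norm. The only subtlety is interpreting derivatives in the weak sense, which is harmless because the Laplacian is a linear combination of second distributional derivatives that lie in $L^2(\Omega)$ whenever $u \in H^2(\Omega)$.

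First I would expand $\Delta u = u_{xx} + u_{yy}$ and apply the triangle inequality in $L^2(\Omega)$ to get
\[
\|\Delta u\|_{L^2(\Omega)} \;\le\; \|u_{xx}\|_{L^2(\Omega)} + \|u_{yy}\|_{L^2(\Omega)}.
\]
Next, since the Frobenius-type Hessian norm satisfies $\|\mathcal{D}^2 u\|_{L^2(\Omega)}^2 = \|u_{xx}\|_{L^2(\Omega)}^2 + 2\|u_{xy}\|_{L^2(\Omega)}^2 + \|u_{yy}\|_{L^2(\Omega)}^2$, each individual entry $\|u_{ij}\|_{L^2(\Omega)}$ is bounded by $\|\mathcal{D}^2 u\|_{L^2(\Omega)}$. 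Substituting this in (or sharper, applying Cauchy–Schwarz to get a factor of $\sqrt{2}$ rather than $2$) yields $\|\Delta u\|_{L^2(\Omega)} \le \sqrt{2}\,\|\mathcal{D}^2 u\|_{L^2(\Omega)}$.

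Finally, the definition of the $H^2$ norm in the paper gives $\|\mathcal{D}^2 u\|_{L^2(\Omega)} \le \|u\|_{H^2(\Omega)}$ immediately, since $\|\mathcal{D}^2 u\|_{L^2(\Omega)}^2$ is one of the three nonnegative summands making up $\|u\|_{H^2(\Omega)}^2$. Chaining the two inequalities produces the desired bound with $c = \sqrt{2}$.

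There is really no obstacle here: the result is essentially a restatement that $\Delta$ is a first-order polynomial in the entries of $\mathcal{D}^2$, and the $H^2$ norm controls every such entry. The only thing worth noting in writing the proof carefully is that the weak derivatives $u_{xx}$ and $u_{yy}$ are a priori only elements of $L^2(\Omega)$, but this is exactly what is needed for all the $L^2$ manipulations above to be legitimate.
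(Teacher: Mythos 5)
Your proof is correct and follows essentially the same route as the paper: triangle inequality on $\Delta u = u_{xx}+u_{yy}$, then the $\sqrt{2}$ bound relating the sum to the Hessian norm (the paper's elementary inequality $a+b\le\sqrt{2}\sqrt{a^2+b^2}$ is exactly the Cauchy–Schwarz step you describe), and finally the trivial domination of $\|\mathcal{D}^2u\|_{L^2(\Omega)}$ by $\|u\|_{H^2(\Omega)}$. Both arguments yield the constant $c=\sqrt{2}$.
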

\begin{proof}
    \begin{align*}
        \| \Delta u \|_{L^2(\Omega)}
        &= \| \partial_{xx} u + \partial_{yy} u \|_{L^2(\Omega)} \\
        &\leq \| \partial_{xx} u \|_{L^2(\Omega)} + \| \partial_{yy} u \|_{L^2(\Omega)}
    \end{align*}
    From here, we use the elementary inequality $a + b \leq \sqrt{2} \sqrt{a^2 + b^2}$ for $a,b\geq 0$ to obtain
    \begin{align*}
        &\| \partial_{xx} u \|_{L^2(\Omega)} + \| \partial_{yy} u \|_{L^2(\Omega)} \\
        &\leq \sqrt{2} \sqrt{ \| \partial_{xx} u \|_{L^2(\Omega)}^2 + \| \partial_{yy} u \|_{L^2(\Omega)}^2 } \\
        &\leq \sqrt{2} \sqrt{ \| \mathcal{D}^2 u \|_{L^2(\Omega)}^2 } \\
        &\leq c \| u \|_{H^2(\Omega)}
    \end{align*}
\end{proof}

\begin{claim}
    \label{boundedness}
    The bilinear form $a(\cdot,\cdot)$ is bounded on $H^2(\Omega)$.
\end{claim}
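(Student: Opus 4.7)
The plan is to establish $|a(u,v)| \le C \|u\|_{H^2(\Omega)}\|v\|_{H^2(\Omega)}$ by a direct application of the triangle inequality and Cauchy--Schwarz, feeding the previous claim in at the end. First I would split the bilinear form into its five natural summands, bounding
\[
|a(u,v)| \le (1-\nu_p)\Big(|{\textstyle\int_\Omega} u_{xx}v_{xx}| + 2|{\textstyle\int_\Omega} u_{xy}v_{xy}| + |{\textstyle\int_\Omega} u_{yy}v_{yy}|\Big) + \nu_p\,|{\textstyle\int_\Omega}(\Delta u)(\Delta v)|,
\]
which is legitimate since $0 \le \nu_p \le 1/2$ keeps both coefficients nonnegative.

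Next I would apply the Cauchy--Schwarz inequality to each of the four integrals separately. For the first three, this yields bounds of the form $\|u_{xx}\|_{L^2(\Omega)}\|v_{xx}\|_{L^2(\Omega)}$, and similarly for the mixed and $yy$ terms. Each second partial derivative satisfies $\|u_{\alpha\beta}\|_{L^2(\Omega)} \le \|\mathcal{D}^2 u\|_{L^2(\Omega)} \le \|u\|_{H^2(\Omega)}$ directly from the definition of the $H^2$ norm, so these three contributions are controlled by $\|u\|_{H^2(\Omega)}\|v\|_{H^2(\Omega)}$ up to an absolute constant.

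For the last integral, Cauchy--Schwarz gives $|\int_\Omega (\Delta u)(\Delta v)| \le \|\Delta u\|_{L^2(\Omega)}\|\Delta v\|_{L^2(\Omega)}$, and here I would invoke the preceding claim (the $\|\Delta u\|_{L^2} \le c\|u\|_{H^2}$ bound) to pass to $c^2 \|u\|_{H^2(\Omega)}\|v\|_{H^2(\Omega)}$. Summing the five estimates and absorbing the numerical prefactors (including the $(1-\nu_p)$ and $\nu_p$ weights, which are both bounded by $1$) into a single constant $C = C(\nu_p)$ finishes the proof.

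There is no real obstacle here: every step is a textbook inequality, and the previous claim was inserted precisely to handle the only subtlety, namely that $\Delta u$ is a sum rather than a single partial derivative. The one small thing to be mindful of is not trying to bound $u_{xy}$ individually by $\|\Delta u\|_{L^2}$ (which would fail on a general bounded domain without further elliptic regularity), but instead routing it through the Hessian norm $\|\mathcal{D}^2 u\|_{L^2}$, which is already part of $\|u\|_{H^2(\Omega)}$ by definition.
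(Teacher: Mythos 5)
Your argument is correct and is essentially the same as the paper's: both apply the triangle inequality to the integrand, Cauchy--Schwarz term by term, control the Hessian-type terms directly by $\|\mathcal{D}^2 u\|_{L^2}\le\|u\|_{H^2}$, and control the $\Delta u\,\Delta v$ term via the preceding claim $\|\Delta u\|_{L^2}\le c\|u\|_{H^2}$. The only cosmetic difference is that you retain the $\nu_p$ and $(1-\nu_p)$ weights until the end and absorb them into the final constant, whereas the paper discards them earlier by bounding both by $1$.
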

\begin{proof}
    \begin{align}
    |a(u,v)| &\leq \int_{\Omega}\Big(
        \nu_p |\Delta u| |\Delta v|
        + (1-\nu_p) \big(|u_{xx}||v_{xx}| + 2|u_{xy}||v_{xy}| + |u_{yy}||v_{yy}|\big)
    \Big)\,d\textbf{x} \\
    &\leq \int_{\Omega} |\Delta u| |\Delta v| \,d\textbf{x}+2\| \mathcal{D}^2 u \|_{L^2(\Omega)}\| \mathcal{D}^2 v \|_{L^2(\Omega)} \\
    &= \| \Delta u \|_{L^2(\Omega)} \| \Delta v \|_{L^2(\Omega)}+2\| \mathcal{D}^2 u \|_{L^2(\Omega)}\| \mathcal{D}^2 v \|_{L^2(\Omega)} \\
    &\leq (c^2+2)  \| u \|_{H^2(\Omega)} \| v \|_{H^2(\Omega)}
    \end{align}
\end{proof}
where the Cauchy–Schwarz inequality was used in (3.4) and (3.5). Because bounded bilinear forms on normed spaces are automatically continuous, Claim 2 also implies that $a(\cdot,\cdot)$ is continuous on $H^2(\Omega).$

To obtain an operator representation via the form method, we first show that
$a(\cdot,\cdot)$ is a \emph{closed} symmetric bilinear form.  Let
\[
D(a):=H^2_{\mathrm{rad}}(\Omega)
\]
denote the form domain, equipped with the inner product
\[
\langle u,v\rangle_a := a(u,v) + \langle u,v\rangle_{L^2(\Omega)},
\]
with associated form norm
\[
\|u\|_a^2 := a(u,u) + \|u\|_{L^2(\Omega)}^2.
\]
We denote by $H_a$ the space $H^2_{\mathrm{rad}}(\Omega)$ endowed with this inner product.
The form $a$ is said to be \emph{closed} if $H_a$ is complete with respect to
$\|\cdot\|_a$, i.e., if $(H_a,\langle\cdot,\cdot\rangle_a)$ is a Hilbert space.

As a set, $H_a$ coincides with $H^2_{\mathrm{rad}}(\Omega)$, but it is equipped with the form norm $\|\cdot\|_a$ rather than the standard $H^2$ norm.
Here we define the radial subspaces:
\[
H^2_{\mathrm{rad}}(\Omega)
:= H^2(\Omega)\cap L^2_{\mathrm{rad}}(\Omega),
\qquad
L^2_{\mathrm{rad}}(\Omega)
:= \{u\in L^2(\Omega): u(R\mathbf{x})=u(\mathbf{x})
\text{ for all } R\in SO(2)\}.
\]
The space $L^2_{\mathrm{rad}}(\Omega)$ is a closed subspace of $L^2(\Omega)$,
and consequently $H^2_{\mathrm{rad}}(\Omega)$ is a closed subspace of
$H^2(\Omega)$ consisting of radial functions.

We next introduce the notion of convergence with respect to the form norm and recall the definition of a closed form.
\begin{definition}[$a$-Cauchy and $a$-convergence]
Let $a$ be a symmetric bilinear form with domain $D(a)\subset H$, and define
the form norm
\[
\|u\|_a^2 := a(u,u) + \|u\|_H^2.
\]
A sequence $\{u_n\}\subset D(a)$ is said to be \emph{$a$-Cauchy} if
\[
\|u_n-u_m\|_a \to 0
\quad \text{as } n,m\to\infty.
\]
We say that $u_n$ \emph{$a$-converges} to $u\in H$, and write
$u_n\xrightarrow{a}u$, if $u_n\to u$ in $H$ and $\{u_n\}$ is $a$-Cauchy.
\end{definition}
\begin{definition}[Closed form]
The bilinear form $a(\cdot,\cdot)$ is said to be \emph{closed} if whenever
$u_n\xrightarrow{a}u$ for some $u\in H$, then $u\in D(a)$ and
\[
\|u_n-u\|_a \to 0
\quad \text{as } n\to\infty.
\]
\end{definition}
Equivalently, the form $a(\cdot,\cdot)$ is closed if and only if
$(D(a),\|\cdot\|_a)$ is complete; see~\cite[Thm.~VI.1.11]{kato}.
\begin{claim}
    $a(\cdot,\cdot)$ is a closed form. i.e., $H_a$ is complete.
\end{claim}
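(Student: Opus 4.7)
The plan is to show that the form norm $\|\cdot\|_a$ is equivalent to the standard $H^2$ norm on the domain $H^2_{\mathrm{rad}}(\Omega)$, from which completeness follows immediately: because $L^2_{\mathrm{rad}}(\Omega)$ is closed in $L^2(\Omega)$, the intersection $H^2_{\mathrm{rad}}(\Omega)=H^2(\Omega)\cap L^2_{\mathrm{rad}}(\Omega)$ is a closed subspace of the Hilbert space $H^2(\Omega)$ and is therefore complete in the $H^2$ norm. If the form norm is equivalent to $\|\cdot\|_{H^2}$, the space $H_a$ inherits completeness.

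The upper estimate $\|u\|_a^2 \leq C_1\|u\|_{H^2}^2$ is immediate from Claim~2 (taking $v=u$). For the reverse estimate, the key is a coercivity argument exploiting the Poisson-ratio restriction $0\leq \nu_p < 1/2$. Since both $\nu_p$ and $1-\nu_p$ are nonnegative, I discard the bending term $\nu_p(\Delta u)^2\geq 0$ pointwise to obtain
\[
a(u,u) \;\geq\; (1-\nu_p)\int_{\Omega}\bigl(u_{xx}^2 + 2u_{xy}^2 + u_{yy}^2\bigr)\,d\textbf{x} \;=\; (1-\nu_p)\,\|\mathcal{D}^2 u\|_{L^2(\Omega)}^2.
\]
This controls the top-order seminorm of $u$. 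To recover the lower-order gradient term in the full $H^2$ norm, I invoke the standard interpolation (Ehrling-type) inequality on the bounded smooth domain $\Omega$: for every $\epsilon>0$ there exists $C_\epsilon>0$ such that
\[
\|\nabla u\|_{L^2(\Omega)}^2 \;\leq\; \epsilon\,\|\mathcal{D}^2 u\|_{L^2(\Omega)}^2 + C_\epsilon\,\|u\|_{L^2(\Omega)}^2, \qquad u\in H^2(\Omega).
\]
Choosing for instance $\epsilon=1$ and combining the two inequalities yields
\[
\|u\|_{H^2(\Omega)}^2 \;\leq\; C_2\bigl(\|\mathcal{D}^2 u\|_{L^2(\Omega)}^2 + \|u\|_{L^2(\Omega)}^2\bigr) \;\leq\; C_3\bigl(a(u,u)+\|u\|_{L^2(\Omega)}^2\bigr) \;=\; C_3\,\|u\|_a^2,
\]
so the two norms are equivalent on $H^2_{\mathrm{rad}}(\Omega)$. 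Any $a$-Cauchy sequence is therefore $H^2$-Cauchy, hence convergent in $H^2_{\mathrm{rad}}(\Omega)$, and the definition of $a$-convergence identifies the limit with $u$. This establishes closedness.

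The main obstacle is ensuring the coercivity bound $a(u,u)\geq (1-\nu_p)\|\mathcal{D}^2 u\|_{L^2}^2$ is legitimately available. Verifying it amounts to the algebraic identity that the integrand is exactly $(1-\nu_p)|\mathcal{D}^2 u|^2+\nu_p(\Delta u)^2$ with both prefactors nonnegative for $0\leq \nu_p <1$, which is the physically relevant range. The Ehrling interpolation inequality is standard on bounded Lipschitz domains (it follows from the compact embedding $H^2(\Omega)\hookrightarrow H^1(\Omega)$ via a contradiction argument), so no essential analytical difficulty arises beyond citing it. I would remark that the restriction to the radial subspace plays no role here; the equivalence of norms holds on all of $H^2(\Omega)$ and restricts to the closed subspace $H^2_{\mathrm{rad}}(\Omega)$ without modification.
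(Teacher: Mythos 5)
Your proof is correct, and it differs from the paper's in one interesting step: the passage from the bound $a(u,u)\geq(1-\nu_p)\|\mathcal{D}^2 u\|_{L^2(\Omega)}^2$ to coercivity over the full $\|\cdot\|_{H^2}$ norm. The paper invokes a Poincar\'e-type estimate (credited to Demkowicz) that controls $\|u\|_{H^2}$ by $\|\mathcal{D}^2u\|_{L^2}$ together with the \emph{means} $\int_\Omega u$, $\int_\Omega\partial_x u$, $\int_\Omega\partial_y u$, and then uses radial symmetry to make the two first-derivative means vanish, reducing the estimate to $\|u\|_{H^2}^2\lesssim\|u\|_{L^2}^2+\|\mathcal{D}^2u\|_{L^2}^2$ on $H^2_{\mathrm{rad}}(\Omega)$. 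You instead absorb the missing gradient term via the Ehrling interpolation inequality $\|\nabla u\|_{L^2}^2\le\epsilon\|\mathcal{D}^2u\|_{L^2}^2+C_\epsilon\|u\|_{L^2}^2$, which is valid on all of $H^2(\Omega)$ without any symmetry assumption. Your route is a bit cleaner in that it never uses radiality, and you correctly flag this: the norm equivalence holds on all of $H^2(\Omega)$ and simply restricts to the closed radial subspace. The paper's approach has the minor advantage of being fully constructive given the cited lemma, whereas Ehrling's inequality is typically proved by compactness and a contradiction argument, so its constant is not explicit; for the purpose of establishing closedness this distinction is immaterial. The boundedness direction, the pointwise coercivity of the integrand, and the final Cauchy-sequence argument match the paper's.
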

\begin{proof}
    To show that $H_a$ is complete, we first demonstrate that the form norm $\|\cdot\|_a$ is equivalent to the standard $H^2$ norm $\|\cdot\|_{H^2(\Omega)}$.
    From Claim~\ref{boundedness}, and the positivity $a(u,u)\ge 0$ (for $0<\nu_p<1$) there exists a constant $C_B>0$ such that
    \begin{equation}
        0\leq a(u,u) \leq C_B \| u \|_{H^2(\Omega)}^2, \quad \forall u \in H^2(\Omega)
    \end{equation}
    Thus, we have the inequality
    \begin{equation}
        \label{a_bound}
        \| u \|_a^2 = a(u,u) + \| u \|_{L^2(\Omega)}^2 \leq (C_B+1) \| u \|_{H^2(\Omega)}^2, \quad \forall u \in H^2(\Omega)
    \end{equation}
    Next, we need to bound the $H^2$ norm by the form norm.
    We know that for any $u \in H^2(\Omega)$, $||u||_{H^2(\Omega)}^2 = ||u||_{L^2(\Omega)}^2 + ||\nabla u||_{L^2(\Omega)}^2 + ||\mathcal{D}^2 u||_{L^2(\Omega)}^2$ 
    and we note the inequality from 
    \cite[Lemm~3.4.2]{Demkowicz}
    \begin{equation}
        \| u \|^2_{H^2(\Omega)} \leq C_a \left( \left|\int_{\Omega} u \, d\textbf{x} \right|^2 + \left|\int_{\Omega} \partial_x u \, d\textbf{x} \right|^2 + \left|\int_{\Omega} \partial_y u \, d\textbf{x} \right|^2 + \| \mathcal{D}^2 u \|_{L^2(\Omega)}^2 \right)
    \end{equation}
    but when $u = U(r) \in \Hkrad{2}$, the integrals of the first derivatives vanish due to the radial symmetry
    \[
        \int_{\Omega} \partial_x u \, d\textbf{x} = \int_{\Omega} U'(r) \cos \theta \, d\textbf{x} = \int_0^{2\pi} \cos \theta \, d\theta \int_0^R U'(r) r \, dr = 0
    \]
    and similarly for $\partial_y u$.
    Thus, for $u \in \Hkrad{2}$, we have
    \[
        \| u \|^2_{H^2(\Omega)} \leq C_a \left( \left|\int_{\Omega} u \, d\textbf{x} \right|^2 + \| \mathcal{D}^2 u \|_{L^2(\Omega)}^2 \right)
        \leq C_a \left(|\Omega| \|u\|^2_{L^2(\Omega)} + \| \mathcal{D}^2 u \|_{L^2(\Omega)}^2 \right)
    \]
    Where $|\Omega| = \pi R^2$. Therefore, we conclude that
    \begin{equation}
        \exists C'_a>0 \text{ such that } \| u \|_{H^2(\Omega)}^2 \leq C'_a \left( \| u \|_{L^2(\Omega)}^2 + \| \mathcal{D}^2 u \|_{L^2(\Omega)}^2 \right), \quad \forall u \in \Hkrad{2}.
    \end{equation}
    On the other hand, using the definition of $\|\cdot\|_a$, we have
    \begin{align}
        &\|u\|^2_{L^2(\Omega)} + a(u,u) = \|u\|^2_{L^2(\Omega)} + \int_\Omega \left( (1-\nu_p)(u_{xx}^2 + 2u_{xy}^2 + u_{yy}^2) + \nu_p (\Delta u)^2 \right) d\textbf{x} \\
        &\geq \|u\|^2_{L^2(\Omega)} + (1-\nu_p) \| \mathcal{D}^2 u \|_{L^2(\Omega)}^2 \\ 
        &\geq  (1-\nu_p) \left( \|u\|^2_{L^2(\Omega)} + \| \mathcal{D}^2 u \|_{L^2(\Omega)}^2 \right),  \quad \forall u \in H^2(\Omega) \text{ and } \nu_p \in (0, 1)\\
        &\geq \frac{1-\nu_p}{C'_a} \| u \|_{H^2(\Omega)}^2, \quad \forall u \in \Hkrad{2}
    \end{align}
    hence, combining (3.9) and (3.15), we have the equivalence of norms
    \begin{equation}
    \label{eq:norm_eq}
        \frac{1}{C_B+1} \| u \|_a^2 \leq \| u \|_{H^2(\Omega)}^2 \leq \frac{C'_a}{1-\nu_p} \| u \|_a^2, \quad \forall u \in \Hkrad{2}
    \end{equation}
    Let $\{u_n\}\subset H_a $ be Cauchy in the norm $ \|\cdot\|_a$.
    By the norm-equivalent estimate~\ref{eq:norm_eq} we obtain for all $n, m$: $\| u_n-u_m \|_{H^2(\Omega)}^2 \leq \frac{C'_a}{1-\nu_p} \| u_n-u_m \|_a^2$. Thus $\{u_n\}$ is Cauchy in $\Hkrad{2}$.
    Since $\Hkrad{2}$ is complete, there exists $u\in \Hkrad{2}$ such that $\|u_n-u\|_{H^2(\Omega)}\to 0$.
    Finally, by the lower bound $\|u_n-u\|_a^2 \le C'\,\|u_n-u\|_{H^2(\Omega)}^2$, we conclude $\|u_n-u\|_a\to 0$.
    This show that $H_a$ is complete, and therefore the bilinear form $a(\cdot,\cdot)$ is closed.
\end{proof}
Since $a(\cdot,\cdot)$ is symmetric, nonnegative, and closed on $L^2_{\mathrm{rad}}(\Omega)$ with domain $H^2_{\mathrm{rad}}(\Omega)$, Kato's representation theorem
\cite[Thms.~VI.2.1 \& VI.2.6]{kato} yields a unique self-adjoint operator
$A$ on $L^2_{\mathrm{rad}}(\Omega)$ such that
\[
a(u,v)=\langle Au,v\rangle_{L^2(\Omega)}
\qquad \forall\, u\in D(A),\ v\in D(a)=H^2_{\mathrm{rad}}(\Omega).
\]
Its domain is given by
\[
D(A):=\Big\{u\in H^2_{\mathrm{rad}}(\Omega): \exists f\in L^2_{\mathrm{rad}}(\Omega)
\text{ such that } a(u,v)=\langle f,v\rangle_{L^2(\Omega)}\ \forall v\in H^2_{\mathrm{rad}}(\Omega)\Big\},
\]
and we then define $Au:=f$.

The phrase ``free-edge boundary conditions in the trace sense'' means that the boundary expressions produced by integration by parts vanish in the sense of Sobolev traces for all test functions $v\in H^2_{\mathrm{rad}}(\Omega)$.
\begin{claim}
    Let $u\in \Hkrad{4} = \Hk{4} \cap \Lrad$  satisfy the free-edge conditions in the trace sense at $r=R$. Then
    \[
        a(u,v)=\int_\Omega (\Delta^2 u)\,v\,d\textbf{x} \quad \forall v \in \Hkrad{2}.
    \]
\end{claim}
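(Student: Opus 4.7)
The plan is to reduce the two-dimensional integrals in $a(u,v)$ to one-dimensional radial integrals on $[0,R]$, integrate by parts twice on the $\Delta u\,\Delta v$ cross term, and then recognize that the surviving boundary terms at $r=R$ are exactly annihilated by the two free-edge conditions \eqref{eqbc}. For radial $u=U(r)$ and $v=V(r)$, the Hessian is diagonal in the orthonormal frame $(\hat r,\hat\theta)$ with eigenvalues $U''(r)$ and $U'(r)/r$, so $\mathcal{D}^2u:\mathcal{D}^2v=U''V''+U'V'/r^2$. Substituting into the Cartesian form of $a(u,v)$ and integrating over the angle yields
\[
a(u,v)=2\pi\int_0^R\!\left[r\,U''V'' + \frac{U'V'}{r} + \nu_p\bigl(U''V' + U'V''\bigr)\right]dr.
\]

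Next I exploit the algebraic identity $U''V'+U'V''=(U'V')'$, so that the $\nu_p$-term collapses to $2\pi\nu_p\,U'(R)V'(R)$, where the contribution at $r=0$ is killed by regularity ($V'(0)=0$). Adding and subtracting a full cross term to complete $r\,\Delta u\,\Delta v=(rU''+U')(V''+V'/r)$, one rewrites
\[
a(u,v)=2\pi\int_0^R (\Delta u)(\Delta v)\,r\,dr - 2\pi(1-\nu_p)\,U'(R)V'(R).
\]
Two successive integrations by parts on the remaining integral, using $r\Delta v=(rV')'$ for the first and the identity $[r(\Delta u)']'=r\,\Delta^2 u$ for the second, produce
\[
2\pi\!\int_0^R (\Delta u)(\Delta v)\,r\,dr
=\int_\Omega(\Delta^2 u)\,v\,d\mathbf{x}
+ 2\pi R\bigl[\Delta u(R)\,V'(R) - (\Delta u)'(R)\,V(R)\bigr],
\]
with the boundary terms at $r=0$ annihilated by the factor $r$ and the regularity of $u,v$.

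Combining the two previous steps yields
\[
a(u,v)=\int_\Omega(\Delta^2 u)\,v\,d\mathbf{x}
-2\pi R\,(\Delta u)'(R)\,V(R)
+2\pi\bigl[R\,\Delta u(R)-(1-\nu_p)\,U'(R)\bigr]V'(R).
\]
The coefficient of $V(R)$ vanishes by the first free-edge condition in \eqref{eqbc}, namely $\partial_r(u_{rr}+r^{-1}u_r)|_{r=R}=0$. Expanding $\Delta u(R)=U''(R)+U'(R)/R$ reduces the bracket multiplying $V'(R)$ to $R\bigl(U''(R)+\nu_p U'(R)/R\bigr)$, which vanishes by the second free-edge condition in \eqref{eqbc}. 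Thus both boundary contributions drop out and the desired identity follows.

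The main technical obstacle is performing the two integrations by parts cleanly in the presence of the polar Jacobian and organizing the resulting boundary expressions so that each coefficient is matched to its corresponding free-edge condition. A secondary technicality is the justification of traces: $u\in H^4_{\mathrm{rad}}(\Omega)$ provides classical values for $U'(R), U''(R), (\Delta u)'(R)$ via Sobolev trace theorems, while $v\in H^2_{\mathrm{rad}}(\Omega)$ supplies $V(R)$ and $V'(R)$ in the trace sense, so a brief density argument approximating $v$ by smooth radial functions legitimizes the integration by parts before passing to the limit.
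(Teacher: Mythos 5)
Your proposal is correct and takes essentially the same route as the paper: pass to radial coordinates so that
\[
a(u,v)=2\pi\int_0^R\Big[(1-\nu_p)\big(U''V''+\tfrac{1}{r^2}U'V'\big)+\nu_p\,\Delta u\,\Delta v\Big]r\,dr,
\]
integrate by parts twice, and observe that the resulting boundary terms at $r=R$ are exactly the bending moment $M_r(u)=u_{rr}+\nu_p r^{-1}u_r$ paired with $v_r$ and the effective shear $Q_r(u)=(\Delta u)_r$ paired with $v$, both of which vanish under \eqref{eqbc}. The paper compresses this into a single sentence (``integrating by parts twice\dots''), whereas you supply the intermediate algebra explicitly — the $(U'V')'$ identity for the $\nu_p$-term, the completion to $r\,\Delta u\,\Delta v$, and the two integrations by parts via $(rV')'=r\Delta v$ and $[r(\Delta u)']'=r\Delta^2u$ — and you verify by direct expansion that the coefficient of $V'(R)$, namely $R\Delta u(R)-(1-\nu_p)U'(R)=R\big(U''(R)+\nu_p R^{-1}U'(R)\big)$, agrees with $R\,M_r(u)(R)$. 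Your closing remark on the trace/density justification also matches the paper's ``in the trace sense'' caveat. No gap.
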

\begin{proof}
    To show this, we turn our attention back to the radial coordinates $r = |\textbf{x}|$ for axisymmetric $u(r),v(r)$.
    One checks that
    \[
        a(u,v)
        =2\pi\int_0^R
        \Big[(1-\nu_p)\Big(u_{rr}v_{rr} + \frac{1}{r^2}u_r v_r\Big)
        + \nu_p\Delta u \Delta v\Big]
        \,r\,dr.
    \]
    Integrating by parts twice, we obtain
    \[
        a(u,v)
        =
        \int_\Omega (\Delta^2 u)\,v\,dx
        \;+\;2\pi\Big[\, r\,M_r(u)\,v_r \;-\; r\,Q_r(u)\,v \,\Big]_{r=0}^{r=R}.
    \]
    where $M_r(u)$ and $Q_r(u)$ are the boundary expressions arising from the
integration by parts. For regular axisymmetric $u,v$ the $r=0$ contribution is $0$ (because of the factor $r$). Hence, if $u$ satisfies the free-edge conditions at $r=R$,
    \[
        M_r(u)(R)=u_{rr}(R)+\nu_p\frac{1}{R}u_r(R)=0,
        \qquad
        Q_r(u)(R)=\frac{d}{dr}\Big(u_{rr}+\frac{1}{r}u_r\Big)\Big|_{r=R}=0,
    \]
    then the boundary term vanishes and, therefore
    \[
        a(u,v)=\int_\Omega (\Delta^2 u)\,v\,dx
        \quad\text{for all  }v\in \Hkrad{2}.
    \]
\end{proof}
\begin{remark}[Identification of the operator domain]
On the disk, standard elliptic regularity for the biharmonic operator applies.
If $u\in D(A)$, then by definition there exists $f\in L^2_{\mathrm{rad}}(\Omega)$ such that $
a(u,v)=\langle f,v\rangle_{L^2(\Omega)}$ for all $v\in H^2_{\mathrm{rad}}(\Omega),$
which implies that $u$ is a weak solution of the biharmonic equation
$\Delta^2 u = f$ with the free-edge boundary conditions.
Since $\Omega$ is smooth and the biharmonic operator is strongly elliptic,
the classical elliptic regularity theory yields
$u\in H^4_{\mathrm{rad}}(\Omega)$ and $Au=\Delta^2 u$. Conversely, if $u\in H^4_{\mathrm{rad}}(\Omega)$ satisfies the free-edge boundary conditions, then integration-by-parts identity yields $a(u,v)=\langle\Delta^2u,v\rangle_{L^2(\Omega)}$ for all $v\in H^2_{\mathrm{rad}}(\Omega),$ so $u\in D(A)$ and $Au=\Delta^2u$. Consequently, on $\Omega$ we may identify $$D(A)=\Big\{
u\in H^4_{\mathrm{rad}}(\Omega):
\Delta^2u\in L^2_{\mathrm{rad}}(\Omega)
\text{ and the free-edge boundary conditions hold}
\Big\}.$$
\end{remark}
Our goal at this point is to show that $A$ has compact resolvent. To this end, for $f\in L^2_{\mathrm{rad}}(\Omega)$ consider the variational problem: find $u \in \Hkrad{2}$ such that
\begin{equation}
    \label{v_prob}
    a(u,v) + \langle u,v \rangle_{L^2(\Omega)} = \langle f,v \rangle_{L^2(\Omega)}, \quad \forall v \in H^2_{\mathrm{rad}}(\Omega).
\end{equation}

\begin{claim}
    The shifted form $(\cdot, \cdot) \mapsto a(\cdot,\cdot) + \langle\cdot,\cdot\rangle_{L^2(\Omega)}$ is bounded and coercive on
$H^2_{\mathrm{rad}}(\Omega)$. 
\end{claim}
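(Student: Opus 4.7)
The plan is to assemble both bounds from results already established in the excerpt. Writing $b(u,v) := a(u,v) + \langle u,v\rangle_{L^2(\Omega)}$, the boundedness half would come out in one line: Claim~\ref{boundedness} gives $|a(u,v)| \le (c^2+2)\|u\|_{H^2(\Omega)}\|v\|_{H^2(\Omega)}$, and Cauchy--Schwarz together with the trivial embedding $\|\cdot\|_{L^2(\Omega)} \le \|\cdot\|_{H^2(\Omega)}$ yields $|\langle u,v\rangle_{L^2(\Omega)}| \le \|u\|_{H^2(\Omega)}\|v\|_{H^2(\Omega)}$. Adding these produces boundedness of $b$ with constant $c^2+3$.

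For coercivity, the key observation is that $b(u,u) = \|u\|_a^2$, so the desired lower bound is precisely the left inequality of the norm equivalence~\eqref{eq:norm_eq} that was already derived inside the proof of Claim~3. That chain of estimates shows, on $H^2_{\mathrm{rad}}(\Omega)$,
\[
\|u\|_a^2 \,\ge\, (1-\nu_p)\bigl(\|u\|_{L^2(\Omega)}^2 + \|\mathcal{D}^2 u\|_{L^2(\Omega)}^2\bigr) \,\ge\, \frac{1-\nu_p}{C'_a}\|u\|_{H^2(\Omega)}^2,
\]
where the second step invokes the Demkowicz-type inequality together with the vanishing of the first-derivative moments $\int_\Omega \partial_x u$ and $\int_\Omega \partial_y u$ for radial functions. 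Since $0<\nu_p<1$, the coercivity constant $(1-\nu_p)/C'_a$ is strictly positive. I would simply quote this inequality rather than redo it.

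The one point I would flag as genuinely requiring care, as opposed to being routine, is the necessity of the radial subspace. On all of $H^2(\Omega)$ with free-edge natural conditions, the form $a$ annihilates every affine function $\alpha+\beta x+\gamma y$; the $L^2$ shift controls only the constant component, and the first-moment terms $\int_\Omega \partial_x u$ and $\int_\Omega \partial_y u$ appearing in Demkowicz's inequality cannot in general be absorbed into $\|u\|_a^2$. The radial hypothesis $u=U(r)$ eliminates these moments by integration in $\theta$, which is exactly what allows the coercivity estimate to close. Beyond this conceptual point, no new ingredients are needed, and the proof of the claim amounts to invoking Claims~2 and~3 in sequence.
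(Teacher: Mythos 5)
Your proposal is correct and follows essentially the same route as the paper: boundedness is obtained from Claim~2 plus Cauchy--Schwarz and the trivial $\|\cdot\|_{L^2(\Omega)}\le\|\cdot\|_{H^2(\Omega)}$ bound, and coercivity by recognizing $b(u,u)=\|u\|_a^2$ and invoking the lower-bound chain already established in the proof of Claim~3 (which rests on the radial Demkowicz-type inequality). Your flag that the restriction to $H^2_{\mathrm{rad}}(\Omega)$ is what eliminates the first-moment obstruction is accurate and is indeed the hidden load-bearing point, even though the paper does not spell it out again at this step.
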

\begin{proof}
The boundedness follows from the boundedness of $a(\cdot,\cdot)$ on $H^2(\Omega)$ and Cauchy-Schwarz for the
inner product $L^2$. 
    \begin{align}
        a(u,u) + \langle u,u \rangle_{L^2(\Omega)} &= \|u\|_{L^2(\Omega)}^2 + \int_\Omega \left( (1-\nu_p)(u_{xx}^2 + 2u_{xy}^2 + u_{yy}^2) + \nu_p (\Delta u)^2 \right) d\textbf{x} \\
        &\geq \|u\|_{L^2(\Omega)}^2 + (1-\nu_p) \|\mathcal{D}^2 u\|_{L^2(\Omega)}^2 \geq \alpha \|u\|_{H^2(\Omega)}^2
    \end{align}
    The final step is due to (3.12).
\end{proof}

The shifted form $(\cdot, \cdot) \mapsto a(\cdot,\cdot) + \langle\cdot,\cdot\rangle$ is continuous and coercive, hence, by the Lax-Milgram theorem, ~\eqref{v_prob} is well-posed, that is,
\[
    \forall \ f \in L_{rad}^2(\Omega) \quad \exists ! \ u \in \Hkrad{2} \ \text{ solving } ~\eqref{v_prob}.
\]
Moreover, there exists a $c>0$ such that
\begin{equation}
    \label{l-m_stability}
    \|u\|_{H^2(\Omega)} \leq c \|f\|_{L^2(\Omega)}.
\end{equation} holds for all $f \in L^2_\mathrm{rad}(\Omega)$.

Define the solution operator $B: \Lrad \to \Hkrad{2}$ by
\[
Bf = u,
\]
where $u$ is the unique solution of ~\eqref{v_prob}. Estimate \eqref{l-m_stability} implies that $B$ is bounded.

Now let $u\in D(A)$. Then $$a(u,v) = \langle Au,v\rangle_{L^2(\Omega)}
\qquad \forall v\in H^2_{\mathrm{rad}}(\Omega).$$
Substituting this into ~\eqref{v_prob} yields
\begin{equation}
    \Big\langle (A + I) u, v \Big\rangle_{L^2(\Omega)} = \langle f, v \rangle_{L^2(\Omega)} \quad \forall v \in \Hkrad{2}
\end{equation}
where $I$ is the identity map. Since $\Hkrad{2}$ is dense in $\Lrad$, the above identity implies (see Conway~\cite[Cor.~2.11]{Conway})
\begin{align*}
    (A + I) u &= f \in \Lrad
\end{align*}
Thus, for all $f\in L^2_{\mathrm{rad}}(\Omega),$ $$u = (A+I)^{-1} f.$$ Comparing with $u=Bf$, we obtain the resolvent representation
$$(A+I)^{-1} = \jmath \circ B$$
where $\jmath$ is the \emph{compact} inclusion map (This comes from the Rellich-Kondrachov theorem, see Adams and Fournier or Renardy and Rogers) \cite[Thm.~7.26]{Renardy_Rogers}
\begin{equation}
    \jmath : \Hkrad{2} \hookrightarrow \Lrad
\end{equation}

Since $A$ is self-adjoint, $(A+I)$ is self-adjoint on $L^2_{\mathrm{rad}}(\Omega)$. Moreover, for any bijective self-adjoint operator $T$ one has $(T^{-1})^* = (T^*)^{-1} = T^{-1}$, hence $(A+I)^{-1}$ is again self-adjoint. Combining this with the representation $(A+I)^{-1} = j\circ B$, composition of a compact operator and a bounded operator, we conclude that $(A+I)^{-1}$ is a compact, self-adjoint operator on $L^2_{\mathrm{rad}}(\Omega)$. Consequently, by the spectral theorem for compact self-adjoint operators
(e.g.~\cite[Thm.~8.94]{Renardy_Rogers}), there exists an orthonormal basis
$\{\varphi_n\}_{n=1}^\infty$ of $L^2_{\mathrm{rad}}(\Omega)$ and real eigenvalues $\mu_n\to 0$
such that
\begin{align}
    &(A+I)^{-1}\varphi_n = \mu_n\varphi_n\\
    \implies& (A + I)\varphi_n= \frac{1}{\mu_n}\varphi_n \\
    \implies& A \varphi_n = \left(\frac{1}{\mu_n} - 1\right)\varphi_n
\end{align}
We can now see that $\varphi_n$ also solves the eigenvalue problem presented in section 3.1, with eigenvalues
\[
\lambda_n^4 = \frac{1}{\mu_n} - 1 \quad \text{ and } \quad \phi_n = \varphi_n.
\]
Therefore, $\{\phi_n\}$ forms an orthonormal eigenbasis for $A$ (and hence for the free-edge radial biharmonic problem).

\subsection{Hankel Transforms and Asymptotics}

Throughout the remainder of the paper, we will frequently require the finite Hankel transform of the basis functions
$\phi_{n}(r) = A_{1n} J_{0}\left(\lambda_{n} r\right) + A_{2n} I_{0}\left(\lambda_{n} r\right)$ on $[0,R]$,
where the finite Hankel transform is defined by 
\[
(\mathscr{H}_{[0,R]} f)(\xi)
:= \int_{0}^{R} f(r)\,J_{0}(\xi r)\,r\,dr .
\]
By linearity,
\[
\mathscr{H}_{[0, R]} \phi_{n} = A_{1n} \mathscr{H}_{[0, R]} [J_{0}\left(\lambda_{n} r\right)](\xi) + A_{2n} \mathscr{H}_{[0, R]} [I_{0}\left(\lambda_{n} r\right)](\xi)
\]
Because these are finite Hankel transforms of Bessel functions, closed-form expressions may be obtained using standard Bessel integral identities~\footnote{These Hankel transform identities are verified using a computer algebra system (SymPy). The code used to verify these transforms can be provided upon request.}.
\[
\mathscr{H}_{[0, R]} [J_{0}\left(\lambda_{n} r\right)](\xi) = \begin{cases}
\frac{R \lambda_{n} J_{0}\left(R \xi\right) J_{1}\left(R \lambda_{n}\right)}{\lambda_{n}^{2} - \xi^{2}} - \frac{R \xi J_{0}\left(R \lambda_{n}\right) J_{1}\left(R \xi\right)}{\lambda_{n}^{2} - \xi^{2}} & \text{for}\: \lambda_{n} \neq \xi \\
\frac{R^{2} J^{2}_{0}\left(R \xi\right)}{2} + \frac{R^{2} J^{2}_{1}\left(R \xi\right)}{2} & \text{otherwise}
\end{cases}
\]

\[
\mathscr{H}_{[0, R]} [I_{0}\left(\lambda_{n} r\right)](\xi) = \frac{R \lambda_{n} I_{1}\left(R \lambda_{n}\right) J_{0}\left(R \xi\right)}{\lambda_{n}^{2} + \xi^{2}} + \frac{R \xi I_{0}\left(R \lambda_{n}\right) J_{1}\left(R \xi\right)}{\lambda_{n}^{2} + \xi^{2}}
\]
We will also make use of the large-$\xi$ asymptotic behavior of these transforms
when establishing convergence of certain improper integrals.
Using the well-known decay of Bessel functions,
\[
J_{\nu}(x)=\mathcal O(x^{-1/2})
\qquad (\nu\ge 0,\ x\to\infty),
\]
the above closed-form expressions imply that, for each fixed $n$,
\[
(\mathscr{H}_{[0,R]}\phi_{n})(\xi)
=
\mathcal O(\xi^{-3/2}),
\qquad
\xi\to+\infty,\ \xi\in\mathbb R.
\]

For the same purpose, we also require the large-$\xi$ behavior of the
half-space admittance $\alpha_{\mathrm{HS}}(\xi,\omega)$ defined in
\eqref{eq2}-\eqref{eq3}.
As $\xi\to\infty$ along the real axis,
\[
\alpha(\xi,\omega)=\sqrt{\xi^{2}-k_{L}^{2}}\sim \xi,
\qquad
\sqrt{\xi^{2}-k_{T}^{2}}\sim \xi,
\]
A straightforward large-$\xi$ expansion shows that the $\xi^{4}$ terms
in $\Omega(\xi,\omega)$ cancel identically, and the Rayleigh denominator
satisfies
\[
\Omega(\xi,\omega)
=
2\bigl(k_{L}^{2}-k_{T}^{2}\bigr)\xi^{2}
+
\mathcal O(1).
\]
Consequently,
\[
\alpha_{\mathrm{HS}}(\xi,\omega)
\sim
-\frac{k_{T}^{2}}{2\mu\,(k_{L}^{2}-k_{T}^{2})}\,\xi^{-1},
\qquad
\xi\to\infty,
\]
and in particular
\[
\lim_{\xi\to\infty}\xi\,\alpha_{\mathrm{HS}}(\xi,\omega)
=
-\frac{k_{T}^{2}}{2\mu\,(k_{L}^{2}-k_{T}^{2})},
\qquad
\alpha_{\mathrm{HS}}(\xi,\omega)=\mathcal O(\xi^{-1}).
\]

\section{Modal Representation of the Truncated Half-Space Operator}

In the infinite-domain setting, Lamb's half-space relation is diagonal in the Hankel variable $\xi$,
the surface displacement and traction satisfy a multiplier law of the form
$\widehat{w}^{\,0}(\xi,\omega)=\alpha_{HS}(\xi,\omega)\widehat{q}^{\,0}(\xi,\omega)$.
For a plate of finite radius $R$, however, the contact restriction $r\in[0,R]$ breaks Hankel diagonality:
although $\widehat{q}(\cdot,\omega)$ is supported on $[0,R]$, the corresponding surface displacement obtained from the infinite-domain half-space operator
is nonlocal and, prior to restriction, is not supported on $[0,R]$. Consequently, to couple the half-space
to the spectral discretization of the disk in Section~5, we work with the physically relevant restricted operator
\[
\mathcal{M}(\omega)\,\widehat{q}(r,\omega)
:=\chi_{[0,R]}(r)\,\mathscr{H}_0^{-1}\!\Big[\alpha_{HS}(\xi,\omega)\,\mathscr{H}_{[0, R]}[\widehat{q}(\cdot,\omega)](\xi)\Big](r),
\qquad r\in[0,R],
\]
and seek (i) an explicit matrix representation of $\mathcal{M}(\omega)$ on the finite-disk basis $\{\phi_n\}$, and
(ii) an efficient procedure for applying its inverse on the same basis.

The central object in this section is the matrix $S(\omega + i\eta)$ associated with the regularized
operator $\mathcal{M}_\eta(\omega)$, obtained by introducing the small imaginary shift $\omega\mapsto \omega+i\eta$
with $\eta>0$ to control the real-axis singularities of $\alpha_{HS}(\xi,\omega)$. In Section~4.1 we derive the matrix elements $\widehat S_{km}(\omega + i\eta)$ and show how the response associated with a given basis function $\phi_n$ is
obtained by solving the corresponding linear system on the disk basis.

The remaining subsections explain the analytic structure that makes these matrix entries interesting.
The half space admittance (Neumann-to-Dirichlet symbol) $\alpha_{HS}$ inherits square-root branch points at the longitudinal and shear thresholds and,
most importantly, a real-axis pole at $\xi=\xi_R(\omega)$ corresponding to the possibility of Rayleigh waves on the surface of the half-space. Section~4.2 reviews the Rayleigh wave equation $\Omega(\xi,\omega)=0$ and
connects it directly to the simple poles that appear inside the $\xi$-integrals defining
$\widehat{S}_{km}(\omega + i\eta)$. Section~4.3 then introduces the Cauchy principal value interpretation of the limiting
integrals as $\eta\to0^+$, while Section~4.4 isolates the pole contribution (handled via
contour deformation/residue terms) that will later be interpreted as radiation damping into the half-space.

  \subsection{Matrix Elements of \texorpdfstring{$\mathcal{M}(\omega)$}{M(ω)}}
  \label{sec:M}
The infinite-domain Lamb relation (see, e.g., Chen et al.~\cite{Chen1988} as a summary of Lamb's solution)
gives the relationship between surface traction and surface displacement in the
Hankel-frequency domain:
\begin{equation*}
    \widehat{u_2}^{\,0}(\xi, \omega) = \alpha_{HS}(\xi, \omega) \widehat{q}^{\,0}(\xi, \omega)
\end{equation*}
where $u_2^0$ is the zeroth order Hankel transform of the vertical displacement at the surface of the soil ($z = 0, r\in [0, \infty)$), and $q^0$ is the zeroth order Hankel transform of the traction at the surface.
On the contact region $r\in[0,R]$ we impose the coupling condition $\widehat u_2(r,\omega)=\widehat w(r,\omega)$, but $\widehat u_2$ is a priori unknown for $r>R$ due to the nonlocality of the Hankel transform.
In order to solve our system with a spectral expansion, we desire an expression for $q$ in terms of $w$.
To reveal $w$ we take the inverse Hankel transform of both sides and restrict to $r \in [0, R]$:
\begin{equation*}
    \widehat{w}(r, \omega) = \chi_{[0, R]}(r) \mathscr{H}_0^{-1} \left[\alpha_{HS}(\xi, \omega) \widehat{q}^{\,0}(\xi, \omega)\right]
\end{equation*}
Since $q$ is only supported on $[0, R]$, we can write
\begin{equation*}
    \widehat{q}^{\,0}(\xi, \omega) = \mathscr{H}_{[0, R]}[\widehat{q}(r)] = \int_0^R \widehat{q}(r, \omega) J_0(\xi r) r dr
\end{equation*}
Thus, we define the restricted half-space operator
\begin{equation*}
    \mathcal{M} \widehat{q}(r, \omega) := \chi_{[0, R]}(r) \mathscr{H}_0^{-1} \left[\alpha_{HS}(\xi, \omega) \mathscr{H}_{[0, R]}[\widehat{q}(r, \omega)]\right]
\end{equation*}
$\alpha_{HS}(\xi, \omega)$ has poles on the real $\xi$ axis, so we shift them off the axis by adding a small positive imaginary part to the frequency $\omega + i \eta$ with $\eta > 0$, and we eventually take the limit $\eta \to 0^+$

\begin{equation*}
    \mathcal{M}_{\eta} \widehat{q}(r, \omega) := \chi_{[0, R]}(r) \mathscr{H}_0^{-1} \left[\alpha_{HS}(\xi, \omega + i \eta) \mathscr{H}_{[0, R]}[\widehat{q}(r, \omega)]\right]
\end{equation*}

Let $\{\phi_n\}_{n\ge 1}$ be a basis of $L^2_r([0,R])$ with inner product
$\langle f,g\rangle_{L^2_r([0,R])}:=\int_0^R f(r)g(r)\,r\,dr$.
For use in our spectral expansion, we seek $\psi_n$ such that
\begin{equation}
\label{eq:inv}
    \mathcal{M}_{\eta} \psi_n(r) = \phi_n(r) \quad r \in [0, R]
\end{equation}
We expand $\psi_n$ on the basis:
\begin{equation*}
    \psi_n(r) = \sum_{k} C_{nk} \phi_k(r) \quad r \in [0, R]
\end{equation*} and
substitute this sum into~\eqref{eq:inv}
\begin{equation*}
    \phi_n(r) = \chi_{[0, R]}(r) \mathscr{H}_0^{-1} \left[\alpha_{HS}(\xi, \omega + i \eta) \mathscr{H}_{[0, R]} \left[\sum_{k} C_{nk} \phi_k(r)\right]\right].
\end{equation*}
To use orthogonality of the basis, we multiply both sides by $\phi_m(r) r$ and integrate over $[0, R]$:
\begin{equation}
    \int_0^R \phi_n(r) \phi_m(r) r dr = \sum_{k} C_{nk} \int_0^R \phi_m(r) \mathscr{H}_0^{-1} \left[\alpha_{HS}(\xi, \omega + i \eta) \mathscr{H}_{[0, R]}[\phi_k(r)]\right] r dr
\end{equation}
Define the matrices,
\begin{align}
    N_{nm} &= \int_0^R \phi_n(r) \phi_m(r) r dr \\
    \widehat{S}_{km}(\omega + i\eta) &= \int_0^R \phi_m(r) \mathscr{H}_0^{-1} \left[\alpha_{HS}(\xi, \omega + i \eta) \mathscr{H}_{[0, R]}[\phi_k(r)]\right] r dr.
\end{align}
Then the coefficients satisfy the dense linear system
\begin{equation}
\label{eq:sys_S}
    \mathbf{C}\,\boldsymbol{\widehat{S}}(\omega+i\eta)=\mathbf{N},
\end{equation}
which we solve for $\mathbf C$ (matrix with entries $C_{nk}$) (numerically, e.g.\ via LU/QR), and then recover
\[
\psi_n(r)=\sum_k C_{nk}\phi_k(r) \quad r \in [0, R].
\]
The integral expression we have for the matrix $\boldsymbol{\widehat{S}}(\omega + i\eta)$ is not convenient for computation because part of the integrand is in the spatial domain, and part is in the Hankel domain, so numerically we would have to account for two separate grids. Moreover, in this form, we would have to approximate the inverse Hankel transform.
However, if we define the zero-extension of the basis functions:
\begin{equation*}
    \widetilde{\phi}_n(r) = \begin{cases}
        \phi_n(r) & r \in [0, R] \\
        0 & r \in (R, \infty)
    \end{cases}
\end{equation*}
we can use Plancherel's theorem for the Hankel transform to rewrite the integral in terms of only Hankel transforms:
\begin{align}
    \widehat{S}_{km}(\omega + i\eta) &= \int_0^R \phi_m(r) \mathscr{H}_0^{-1} \left[\alpha_{HS}(\xi, \omega + i \eta) \mathscr{H}_{[0, R]}[\phi_k(r)]\right] r dr \\
    &= \int_0^{\infty} \widetilde{\phi}_m(r) \mathscr{H}_0^{-1} \left[\alpha_{HS}(\xi, \omega + i \eta) \mathscr{H}_0[\widetilde{\phi}_k(r)]\right] r dr \\
    &= \int_0^{\infty} \alpha_{HS}(\xi, \omega + i \eta) \mathscr{H}_{[0, R]}[\phi_k(r)](\xi) \mathscr{H}_{[0, R]}[\phi_m(r)](\xi) \xi d\xi
\end{align}
With this simplification, we do not have to approximate any Hankel transforms numerically because we already have exact expressions for $\mathscr{H}_{[0, R]}[\phi_n(r)]$.

  \subsection{Rayleigh Denominator and Singular Structure}
 
The Hankel domain multiplier that defines the soil operator $\alpha_{HS}(\xi, \omega) = -\frac{\alpha(\xi, \omega)k_T^2(\omega)}{\mu \Omega(\xi, \omega)}$ 
blows up at the Rayleigh wave number $\xi=\xi_R(\omega) \in \R$ for which the multiplier's denominator $\Omega(\xi,\omega)$ vanishes while the numerator remains finite and nonzero. Physically, Rayleigh waves are a combination of longitudinal and shear waves and propagate along the free surface of the half-space. See also the definition of $\Omega$ (1.3), the square-root factors $\sqrt{\xi^2-k_T^2}$ and $\sqrt{\xi^2-k_L^2}$ introduce branch points at $\xi=k_T$ and $\xi=k_L$ where the $\xi$-derivative of $\alpha_{HS}$ blows up. In the axisymmetric setting, we restrict ourselves to $\xi\ge 0$; only positive branch points and poles are encountered. Physically, longitudinal waves travel faster than shear waves, we take $k_L(\omega) < k_T(\omega)$ for the remainder of our analysis.

The Rayleigh wavenumber satisfies the dispersion relation $\Omega(\xi,\omega)=0$, written equivalently as
\[
\left(2\xi^2 - \frac{\omega^2}{C_T^2}\right)^2
= 4 \xi^2 \left(\xi^2 - \frac{\omega^2}{C_T^2}\right)^{1/2}
               \left(\xi^2 - \frac{\omega^2}{C_L^2}\right)^{1/2},
\]
which is the classical Rayleigh wave equation.  Dividing by $\xi^4$ and introducing
the nondimensional phase velocity $\zeta=\left(\frac{\omega}{\xi C_T}\right)^2$ and the wave-speed ratio
$\kappa=\frac{C_T}{C_L}\in(0,1)$ gives
\begin{equation}\label{eq:dimension-less}
    (2-\zeta)^2 = 4(1-\zeta)^{1/2}\,(1 - \kappa^2 \zeta)^{1/2}.
\end{equation}
See~\cite[\S24]{landau7} for more a more detailed treatment of this relation.
For each physically relevant $\kappa\in(0,1)$ there is a unique solution $\zeta\in(0,1)$, and the
corresponding Rayleigh wave speed is $c_R=\sqrt{\zeta}\, C_T<C_T$.\footnote{In the literature on elasticity, it is common to rationalize \ref{eq:dimension-less} to a cubic polynomial in $\zeta$ by squaring both sides, which introduces spurious zeros that cannot occur in the original equation; We prefer to work with the original form \ref{eq:dimension-less} to avoid this complication.}
Similar to the transverse and shear wave speeds, the Rayleigh wave number is defined as
\[
\xi_R(\omega)=\frac{\omega}{c_R}
\]
which lies to the right of the shear and longitudinal branch points:
\[
\xi_R(\omega) > k_T(\omega)=\frac{\omega}{C_T} > k_L(\omega)=\frac{\omega}{C_L}.
\]
In the next sections, we give a precise meaning to the singular integral
(via the principal value) that we must compute to assemble the $\boldsymbol{\widehat{S}}$ matrix.

  \subsection{Cauchy Principal Values}
  
The matrix introduced in Section~4.1 is defined, for $\eta>0$, by the integral formula
\[
\widehat{S}_{km}(\omega + i\eta)
=\int_{0}^{\infty}\alpha_{HS}(\xi,\omega+i\eta)\,
\mathscr{H}_{[0,R]} \phi_k\,
\mathscr{H}_{[0,R]} \phi_m\,\xi\,d\xi .
\]

\begin{remark}[Small shift in frequency]
We adopt the standard limiting-absorption prescription $z=\omega+i\eta$ with $\eta>0$, for which the integral is well defined and convergent, and define the physical quantity by $\widehat{S}_{km}(\omega+0^+):=\lim_{\eta\to0^+}\widehat{S}_{km}(\omega+i\eta)$. 
With our sign convention, $\widehat{f}(\omega) = \int_{-\infty}^{\infty} f(t)\,e^{i\omega t}\,dt,$ the frequency shift $z=\omega + i\eta$ introduces a factor $e^{-\eta t}$ in the Fourier transform of $S_{km}(t)$. This guarantees convergence of the representation 
\[
\widehat S_{km}(z)=\int_{0}^{\infty} S_{km}(t)\,e^{izt}\,dt,\qquad \Im z>0,
\]
and enforces causality in time: $S_{km}(t)=0$ for $t<0$ if and only if its Fourier transform is analytic in the upper half-plane, and admits boundary values $\widehat S_{km}(\omega+i0^+)$
on $\mathbb R$ (\cite[\S3]{GreensFunctionCausality}). Causality therefore requires $\widehat S_{km}(\omega)$ to be analytic for $\Im \omega>0$, so that closing the inverse-transform contour in the upper half-plane for $t<0$ encloses no singularities and gives $S_{km}(t)=0$.
By contrast, taking the opposite boundary value $\widehat S_{km}(\omega-i0^+)$ corresponds to
an analytic continuation from the \emph{lower} half-plane and yields the acausal kernel supported on $t\le 0$ (time-reversed response).
\end{remark}

In order to give a precise meaning to the real-frequency limit ($\eta \to 0^+$), we can use the Sokhotski-Plemelj formula to show that the limit is equal to the principal value integral of $\alpha_{HS}\mathscr{H}_{[0,R]} \phi_k\,\mathscr{H}_{[0,R]} \phi_m\,$ plus the residue contribution.
Then use contour integration to show that the limit is well-defined.
\begin{claim}[Boundary value]\label{prop:Skm_PV}
For fixed $\omega>0$, $\widehat S_{km}(\omega+i\eta)$ is finite for every $\eta>0$ and admits a limit
\[
\widehat S_{km}(\omega+0^+):=\lim_{\eta \to 0+}\widehat S_{km}(\omega+i\eta).
\]
Moreover,
\begin{equation}\label{eq:PV_res}
\widehat S_{km}(\omega+0^+) = \PV\int_{0}^{\infty}\alpha_{HS}(\xi,\omega)\, \phi_k^0(\xi) \phi_m^0(\xi)\,\xi\,d\xi + i\pi \ \res{\xi}{\xi_R(\omega)} \left(\alpha_{HS}(\xi,\omega)\right)\, \phi_k^0(\xi_R)\phi_m^0(\xi_R)\,\xi_R,
\end{equation}
and the right-hand side is finite.
\end{claim}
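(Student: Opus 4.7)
The plan is to apply the Sokhotski-Plemelj formula $(x - i0^+)^{-1} = \PV(1/x) + i\pi\delta(x)$ after isolating the Rayleigh pole contribution. First I would verify that $\widehat S_{km}(\omega+i\eta)$ is finite for each $\eta>0$. The only real-axis singularity of $\alpha_{HS}(\xi,\omega)$ is the simple pole at $\xi=\xi_R(\omega)$: the branch-point values $\xi=k_{L,T}(\omega)$ produce only square-root non-smoothness (with $\alpha_{HS}$ itself remaining bounded, and in fact vanishing at $k_L$). Under the shift $\omega\mapsto\omega+i\eta$ this pole moves off the real axis, so the integrand is continuous on $[0,\infty)$; combined with the Section~3.3 estimates $\alpha_{HS}(\xi,\omega+i\eta)=\mathcal O(\xi^{-1})$ and $(\mathscr H_{[0,R]}\phi_n)(\xi)=\mathcal O(\xi^{-3/2})$, the integrand decays like $\mathcal O(\xi^{-3})$ at infinity, giving absolute convergence.

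The key geometric input for the limit is that the Rayleigh pole moves into the \emph{upper} $\xi$-plane. Inspecting \eqref{eq3} shows that $\Omega$ is homogeneous in $(\xi,\omega)$ in the sense $\Omega(\xi,\omega)=\omega^{4}\,\Omega(\xi/\omega,1)$, so the root of \eqref{eq:dimension-less} gives $\xi_R(z)=z/c_R$ with $c_R$ the material-determined Rayleigh speed of Section~4.2. Hence $\xi_R(\omega+i\eta)=\omega/c_R+i\eta/c_R$ approaches $\xi_R(\omega)\in\mathbb R_{>0}$ from above as $\eta\downarrow 0$. The branch points shift similarly to $(\omega+i\eta)/C_{L,T}$, but since $\alpha_{HS}$ is bounded there and the non-smoothness is of integrable $|\xi-k|^{1/2}$ type, their contribution passes to the limit by dominated convergence.

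To complete the proof I would localize with a smooth cutoff $\chi$ supported in a small interval $(\xi_R-\delta,\xi_R+\delta)$ that avoids $k_L,k_T$ and satisfies $\chi\equiv 1$ near $\xi_R$. Outside $\mathrm{supp}\,\chi$, dominated convergence gives the corresponding piece of the principal value directly. Near $\xi_R$, I would decompose $\alpha_{HS}(\xi,\omega+i\eta)=R(\omega+i\eta)/\bigl(\xi-\xi_R(\omega+i\eta)\bigr)+g_\eta(\xi)$, with $R$ the residue and $g_\eta$ smooth on $\mathrm{supp}\,\chi$; the smooth remainder passes to the limit by dominated convergence, while the polar part becomes
\[
R(\omega+i\eta)\int\frac{\chi(\xi)\,\phi_k^0(\xi)\,\phi_m^0(\xi)\,\xi}{\xi-\xi_R(\omega)-i\eta/c_R}\,d\xi,
\]
to which the distributional Sokhotski-Plemelj identity applies directly, producing both the remaining principal-value piece and the residue contribution $+i\pi\,R(\omega)\,\phi_k^0(\xi_R)\,\phi_m^0(\xi_R)\,\xi_R$. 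Summing the four pieces yields \eqref{eq:PV_res}, and every piece is finite by the argument just given.

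The main obstacle, and the step that pins down the sign of the $i\pi$ term, is the half-plane in which $\xi_R(\omega+i\eta)$ lies. The homogeneity argument above dispatches this cleanly; without it, one would need an implicit-function argument together with a direct sign analysis of $\partial_\omega\Omega/\partial_\xi\Omega$ at $(\xi_R,\omega)$, which is where the essential bookkeeping difficulty would sit. A secondary point to verify is that the residue $R(\omega+i\eta)$ is continuous in $\eta$ down to $\eta=0$, which follows because $\xi_R$ is a simple zero of $\Omega$ (the denominator has nonvanishing $\xi$-derivative at $\xi_R$, as is standard for the Rayleigh root).
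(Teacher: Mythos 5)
Your proposal is correct and rests on the same analytic core as the paper's proof: isolate the simple Rayleigh pole, observe that under the shift $\omega\mapsto\omega+i\eta$ the pole moves to $\xi_R(\omega)+i\eta/c_R$ in the upper $\xi$-half-plane, and apply the Sokhotski--Plemelj identity to the polar factor to produce the principal value plus the $+i\pi$ residue term. Where you diverge is in the bookkeeping and in the finiteness argument. The paper isolates the pole with a global subtraction $\alpha_{HS}=A_R/(\xi-\xi_R)+B$ and splits $[0,\infty)$ at $k_T$, then establishes finiteness of the resulting principal value through a separate contour-deformation argument (a rectangular contour with semicircular indentations at $k_L$, $k_T$, $\xi_R$), estimating each side of the rectangle and showing the indentation contributions at the branch points vanish like $\varepsilon^{1/2}$. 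You instead localize with a smooth cutoff around $\xi_R$ that avoids the branch points, treat the far-field and the smooth remainder near the pole by dominated convergence (using the $\mathcal O(\xi^{-3})$ decay of $\alpha_{HS}\,g_{km}\,\xi$), and obtain finiteness as a byproduct of the same estimates, with no complex-contour machinery. This is a more elementary and slightly more economical route to the same conclusion. You also add a point the paper leaves implicit: the homogeneity $\Omega(\xi,\omega)=\omega^4\,\Omega(\xi/\omega,1)$, which cleanly justifies $\xi_R(\omega+i\eta)=(\omega+i\eta)/c_R$ and thereby the upper-half-plane location of the shifted pole and the sign of the $i\pi$ term; the paper simply invokes the formula $\xi-\xi_R(\omega+i\eta)=\xi-\xi_R(\omega)-i\eta/c_R$ without derivation. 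One minor caveat: your dominated-convergence step near the branch points should note that the dominating function is locally of the form $C|\xi-k|^{-1/2}$ near $\xi=k_T$ (coming from differentiating $\Omega$, not $\alpha_{HS}$ itself, whose value stays bounded there) and $C|\xi-k|^{1/2}$ near $\xi=k_L$; both are integrable, so the argument goes through, but the wording ``with $\alpha_{HS}$ itself remaining bounded'' slightly understates the care needed to select a uniform dominating function as $\eta\downarrow0$.
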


\begin{proof}
First, we show the equality \eqref{eq:PV_res} using a Laurent expansion and the Sokhotski-Plemelj formula, then we show the finiteness of the right-hand side via contour integration.
The pole at $\xi = \xi_R$ is of order one ($\partial_{\xi} \Omega(\xi_R, \omega) \neq 0$) and the Hankel transformed factors are entire.

We can write $\alpha_{HS}(\xi, \omega)$ as a Laurent expansion around the pole at $\xi = \xi_R$:
\begin{equation}
    \alpha_{HS}(\xi, \omega) = \frac{A_R(\omega)}{\xi - \xi_R} + \sum_{n=0}^{\infty} B_n(\omega) (\xi - \xi_R)^n = \frac{A_R(\omega)}{\xi - \xi_R} + B(\xi, \omega)
\end{equation}
Where $A_R(\omega)$ is the residue of $\alpha_{HS}$ at $\xi = \xi_R$.
\[
A_R(\omega) = \res{\xi}{\xi_R} \alpha_{HS} =
 -\frac{\alpha(\xi_R, \omega) k_T^2(\omega)}{\mu \, [\partial_{\xi} \Omega(\xi, \omega)]_{\xi = \xi_R}}
\]
$B(\xi, \omega)$ inherits the square-root cusps but is analytic for $\xi > K_T$. Because $\alpha_{HS}(\xi, \omega) = \mathcal{O}(\xi^{-1})$ as $\xi \to \infty$, $B(\xi, \omega) = \mathcal{O}(\xi^{-1})$ as $\xi \to \infty$ as well.
For brevity, we define $g_{km}(\xi):= \mathscr{H}_{[0,R]} \phi_k(\xi) \,\mathscr{H}_{[0,R]} \phi_m(\xi)$ = $\mathcal{O}(\xi^{-3})$
Then we rewrite $S_{km}(\omega + i \eta)$ as
\begin{align}
    \widehat{S}^{}_{km}(\omega + i\eta) = &\int_{0}^{k_T} \alpha_{HS}(\xi, \omega + i \eta) \ g_{km}(\xi) \ \xi \ d\xi \\
    + A_R(\omega + i \eta) \ &\int_{k_T}^{\infty} \frac{g_{km}(\xi) \ \xi}{\xi - \xi_R(\omega + i \eta)} \ d\xi \\
    + &\int_{k_T}^{\infty} B(\xi, \omega + i \eta) \ g_{km}(\xi) \ \xi \ d\xi
\end{align}

Let us focus on the second integral. We can use the definition of $\xi_R(\omega + i \eta)$ to rewrite the denominator:
\begin{equation}
    \xi - \xi_R(\omega + i \eta) = \xi - \xi_R(\omega) - i \frac{\eta}{c_R}
\end{equation}
So we can use the Sokhotski-Plemelj formula to take the limit $\eta \to 0^+$:
\begin{align}
    \lim_{\eta \to 0^+} &A_R(\omega + i \eta) \ \int_{k_T}^{\infty} \frac{g_{km}(\xi) \ \xi}{\xi - \xi_R(\omega) - i \frac{\eta}{c_R}} \ d\xi \\
    &= A_R(\omega) \PV \int_{k_T}^{\infty} \frac{g_{km}(\xi) \ \xi}{\xi - \xi_R(\omega)} \ d\xi + i \pi A_R(\omega) g_{km}(\xi_R(\omega)) \ \xi_R(\omega)
\end{align}
Thus we have
\begin{align}
    \widehat{S}_{km}(\omega+0^+) &= \int_{0}^{k_T} \alpha_{HS}(\xi, \omega) \ g_{km}(\xi) \ \xi \ d\xi \\
    &+ \PV \int_{k_T}^{\infty} \left( \frac{A_R(\omega)}{\xi - \xi_R(\omega)} +  B(\xi, \omega) \right) g_{km}(\xi) \ \xi \ d\xi \\
    &+ i \pi A_R(\omega) g_{km}(\xi_R(\omega)) \ \xi_R(\omega)
\end{align}
or equivalently,
\begin{align}
    \widehat{S}_{km}(\omega+0^+) &= \PV \int_{0}^{\infty} \alpha_{HS}(\xi, \omega) \ g_{km}(\xi) \ \xi \ d\xi + i \pi A_R(\omega) g_{km}(\xi_R(\omega)) \ \xi_R(\omega)
\end{align}

\paragraph{Boundedness of the Limit}
Next, we show the boundedness of the right-hand side of \eqref{eq:PV_res} through contour integration.
Consider the \emph{counterclockwise} contour 
\[
\Gamma = [0, k_L-\varepsilon] \cup [k_L + \varepsilon, k_T-\varepsilon] \cup [k_T + \varepsilon, \xi_R - \varepsilon] \cup [\xi_R + \varepsilon, \xi_{\text{max}}] \cup \left(\bigcup_{i=1}^{3} \gamma_i \right) \cup \left(\bigcup_{i=1}^{2} B_i \right) \cup \sigma
\]
as shown in figure \ref{rectangle}, where $\gamma_1$ is the top horizontal segment from $(\xi_{\text{max}}, y)$ to $(0, y)$, $\gamma_2$ is the left vertical segment from $(0, y)$ to $(0, 0)$, $\gamma_3$ is the right vertical segment from $(\xi_{\text{max}}, 0)$ to $(\xi_{\text{max}}, y)$, $B_1$ and $B_2$ are the \emph{clockwise} semicircular indentations around $k_L$ and $k_T$ in the upper half-plane, respectively, and $\sigma$ is the \emph{clockwise} semicircular indentation around $\xi_R$ in the upper half-plane.

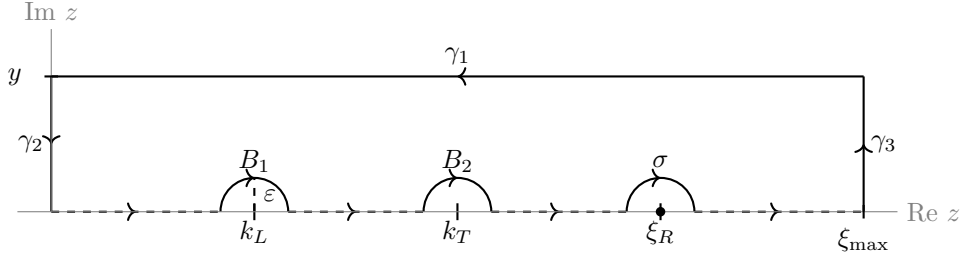
\begin{figure}[htbp]
\centering
\begin{tikzpicture}[scale=0.9]
  \def\Xmin{-6}
  \def\Xmax{6}
  \def\Ymax{2}
  \def\rA{0.5}
  \def\rB{0.5}
  \def\rC{0.5}
  \def\xA{-3}
  \def\xB{0}
  \def\xC{3}

  \draw[thick, ->--, black] (\Xmin,0) -- (\xA-\rA,0);
  \draw[thick, ->--, black] (\xA+\rA,0) -- (\xB-\rB,0);
  \draw[thick, ->--, black] (\xB+\rB,0) -- (\xC-\rC,0);
  \draw[thick, ->--, black] (\xC+\rC,0) -- (\Xmax,0);
  \draw[thick,->-] (\Xmax,0) -- (\Xmax,\Ymax);
  \draw[thick,->-] (\Xmax,\Ymax) -- (\Xmin,\Ymax);
  \draw[thick,->-] (\Xmin,\Ymax) -- (\Xmin,0);

  \draw[thick,->-] (\xA-\rA,0) arc (180:0:\rA);
  \draw[thick,->-] (\xB-\rB,0) arc (180:0:\rB);
  \draw[thick,->-] (\xC-\rC,0) arc (180:0:\rC);

  \draw[thick, dashed] (\xA,0) -- (\xA,\rA);
  \node[right] at (\xA,\rA/2) {$\varepsilon$};

  \draw[thick] (\xA,0.125) -- (\xA,-0.125);
  \draw[thick] (\xB,0.125) -- (\xB,-0.125);
  \draw[thick] (\xC,0.125) -- (\xC,-0.125);


  \draw[-,gray] (-6.5,0) -- (6.5,0) node[right] {$\text{Re} \ z$};
  \draw[-,gray] (\Xmin,0) -- (\Xmin,\Ymax+0.7) node[above] {$\text{Im} \ z$};

  \node[below] at (\xA,0) {$k_L$};
  \node[below] at (\xB,0) {$k_T$};
  \node[below] at (\xC,0) {$\xi_R$};

    \node at (\xA,0.75) {$B_1$};
    \node at (\xB,0.75) {$B_2$};
    \node at (\xC,0.75) {$\sigma$};

  \fill (\xC,0) circle (2pt);


  \node at (0,\Ymax+0.3) {$\gamma_1$};
  \node at (\Xmin-0.3,\Ymax/2) {$\gamma_2$};
  \node at (\Xmax+0.3,\Ymax/2) {$\gamma_3$};

  \draw[thick] (\Xmin+0.1,\Ymax) -- (\Xmin-0.1,\Ymax);
  \node[left] at (\Xmin-0.3,\Ymax) {$y$};

    \draw[thick] (\Xmax,0.1) -- (\Xmax,-0.1);
    \node[below] at (\Xmax,-0.1) {$\xi_\text{max}$};

\end{tikzpicture}
\caption{Contour Path}
\label{rectangle}
\end{figure}

$y$ remains fixed, while $\xi_{\rm{max}} \to \infty$. We fix branches of the square root functions so that
$\sqrt{z^{2}-k_{L}^{2}}$ and $\sqrt{z^{2}-k_{T}^{2}}$ are analytic in the
region $\{0\le \Re z \le \xi_{\max},\, 0\le \Im z \le y\}$ away from the
branch points at $z=k_L$ and $z=k_T$; the branch cuts are chosen to lie
outside this region, e.g. the cuts can be chosen as vertical rays
$\{\xi=k_j - i s: s\ge0\}$, $j\in\{L,T\}$. For $\varepsilon>0$ the integrand
$\alpha_{HS}(z,\omega)\,g_{km}(z)\,z$ is analytic in the region enclosed by
$\Gamma$ except for a simple pole at $z=\xi_R(\omega)$, which is avoided by
the indentation $\sigma$. Therefore, by Cauchy's integral theorem,
\begin{equation}
    0=\int_{\Gamma} \alpha_{HS}(z, \omega) g_{km}(z) z dz = \Big(\text{real-axis segments}\Big)
+ \sum_{i=1}^3\int_{\gamma_i}\cdots
+ \sum_{i=1}^2\int_{B_i}\cdots
+ \int_\sigma \cdots .
\end{equation}
Let $f(z)=\alpha_{HS}(z,\omega)\,g_{km}(z)\,z$. Since $\xi_R$ is a simple pole
of $f$ and $\sigma$ is the clockwise upper semicircle $z=\xi_R+\varepsilon
e^{i\theta}$, $\theta\in[0,\pi]$, we have
$\lim_{\varepsilon \rightarrow 0}\int_\sigma \alpha_{HS}(z,\omega)\,g_{km}(z)\,z\,dz= -\, i\pi\,\text{Res}_{z=\xi_R(\omega)}
\big(\alpha_{HS}(z,\omega)\,g_{km}(z)\,z\big).$
We can now split the contour integral into its components and rearrange for the principal value integral (the segments along the real axis excluding the indentations) plus the residue contribution from the pole at $\xi_R$:
\begin{align}
    \PV &\int_{0}^{\infty} \alpha_{HS}(\xi, \omega) g_{km}(\xi) \xi d\xi = \\
    &-\lim_{\varepsilon \to 0, \xi_{\text{max}} \to \infty} \Bigg[ \sum_{i=1}^2 \int_{B_i} \alpha_{HS}(z, \omega) g_{km}(z) z dz
    + \sum_{i=1}^3 \int_{\gamma_i} \alpha_{HS}(z, \omega) g_{km}(z) z dz
    \Bigg] \\
    &+ i\pi\,\text{Res}_{z=\xi_R(\omega)}
\big(\alpha_{HS}(z,\omega)\,g_{km}(z)\,z\big)
\end{align}
\begin{enumerate}[label=(\roman*)]
    \item \textbf{Top horizontal segment $\gamma_1$}: As $\xi_{\text{max}} \to \infty$, the integrand decays like $\xi_{\text{max}}^{-3}$ and is analytic on the path, so the integral along this segment is finite in the limit.
    \item \textbf{Left vertical segment $\gamma_2$}: For a fixed $y>0$ and $\xi = 0$ the integrand is analytic on the path and the contour is bounded, so the integral along this segment is finite.
    \item \textbf{Right vertical segment $\gamma_3$}: As $\xi_{\text{max}} \to \infty$ the integrand decays like $\xi_{\text{max}}^{-3}$ and $y$ remains constant, so in the limit, the integral along this path is zero.
    \item \textbf{Semicircular indentations $B_1$ and $B_2$}: 
    Near each branch point
$k\in\{k_L,k_T\}$ the integrand has square-root branch points at $k_L$ and $k_T$, that is, it has the local behavior
\[
\alpha_{HS}(\xi,\omega)\,g_{km}(\xi)\,\xi = \mathcal O\big((\xi-k)^{-1/2}\big),
\qquad \xi\to k,
\]
so on the indentation $\xi=k+\varepsilon e^{i\theta}$ ($0\le\theta\le\pi$) we have
\[
\left|\int_{B_k}\alpha_{HS}(\xi,\omega)\,g_{km}(\xi)\,\xi\,d\xi\right|
\le C\int_0^\pi \varepsilon^{-1/2}\,\varepsilon\,d\theta
= \mathcal O(\varepsilon^{1/2})\xrightarrow[\varepsilon\to0]{}0.
\]
Therefore, as $\varepsilon \to 0$, the integrals along these segments are zero.
\end{enumerate}
Although individual contour integrals depend on the chosen height $y>0$, their sum is independent of $y$ by analyticity; consequently, the limiting real-axis representation obtained after $\varepsilon\to0$ and $\xi_{\max}\to\infty$ is unique, well defined and yields the representation \eqref{eq:PV_res}, in which the principal value integral is finite and the contribution of the Rayleigh pole is given by the residue term. Hence $\widehat S_{km}(\omega+0^+)$ is the well-defined physical matrix.
\end{proof}

\begin{remark}[Well-posedness versus numerical approximation]
The use of contour integration along a rectangular path with small indentations at the singular points ensures that \eqref{eq:PV_res} is well defined, produces the matrix $\widehat{S}_{km}(\omega+0^+)$, and relies solely on analytic properties; it does not depend on specific numerical approximations. 

The Bernstein-ellipse framework introduced in section 5.2 serves a different purpose. It is used to describe the region of complex analyticity of the transformed integrands in a manner directly compatible with classical Gauss-Legendre theory, thereby providing quantitative exponential convergence estimates for the numerical quadrature.
\end{remark}

  \subsection{Residue Contributions and Radiation Damping}
The improper integrals for the entries of the $\widehat{\mathbf{S}}$ matrix contain a simple pole on the real $\xi$-axis at the Rayleigh wavenumber $\xi=\xi_R(\omega)$, which corresponds to the zero of the Rayleigh denominator
$\Omega(\xi,\omega)$. This pole represents the surface (Rayleigh) wave of the elastic half-space. As stated in the previous subsection, these integrals must be interpreted in the sense of the limiting absorption principle,
i.e., as limits $\omega\mapsto\omega+i0^+$. This yields a decomposition of the form (4.19). The residue term is purely imaginary and isolates the contribution of the Rayleigh surface wave to the plate-soil interaction. The principal value integral may acquire an imaginary part, reflecting radiation
into the compressional and shear wave continua associated with the branch-cut structure of the Lamb kernel.

The imaginary part of $\widehat{\mathbf{S}}(\omega+0^+)$ has a physical interpretation. It represents radiation of energy from the plate into the underlying half-space as outward-propagating waves, including both body waves associated with the continuous spectrum and the surface Rayleigh wave associated with the pole. This mechanism gives rise to frequency-dependent radiation damping, even in the absence of material
dissipation.

From an operator-theoretic point of view, the appearance of imaginary terms is a direct consequence of the continuous spectrum of the elastic half-space and the enforcement of the outgoing wave condition via the limiting absorption principle. In the frequency domain, the resulting response operator is therefore non-self-adjoint, reflecting the fact that energy is
not conserved within the plate alone but is radiated into the surrounding
half-space.

\section{Assembly and Inversion of the Discrete System}
\subsection{Spectral Truncation and Discrete Formulation}
  Using our basis $\{\phi_n\}$, we discretize the problem outlined in \ref{eq:plate_freq}.
  Let $\widehat{w}(r, \omega) = \sum_n^{\infty} a_n(\omega) \phi_n(r)$.
  \begin{equation}
    \Big(D\,\Delta^2 - \rho h\,\omega^2\Big)\sum_n^N a_n(\omega) \phi_n(r) + \widehat q(r,\omega)
    = \frac{\widehat p(\omega)}{2\pi r}\delta(r), \qquad 0\le r\le R.
  \end{equation}
  Using the inverted soil operator action on $\phi_n$ derived in \ref{sec:M}
  \begin{equation*}
    \widehat{q} = \sum_{n=1}^{\infty} a_n(\omega) \psi_n
  \end{equation*}
Multiply by $\phi_m(r) \ r$ and integrate over $[0, R]$ to use the orthogonality of $\{\phi_n\}$
  \begin{align}
    &D \sum_{n=1}^{\infty} a_n(\omega) \int_{0}^{R} \Delta^2 \phi_n \phi_m r dr - \rho h \omega^2 \sum_{n=1}^{\infty} a_n(\omega) \int_{0}^{R} \phi_n \phi_m r dr \\
    &+ \sum_{n=1}^{\infty} a_n(\omega) \int_{0}^{R} \psi_n \phi_m r dr = \frac{\widehat{p}(\omega)}{2\pi} \phi_m(0)
\end{align}
Define the diagonal normalization matrix $N_{nm} = \int_0^R \phi_n(r) \phi_m(r) \ rdr$ and use the fact that $\Delta^2 \phi_n = \lambda_n^4 \phi_n$ to rewrite the above equation:
\begin{equation}
\label{ep:sys_K}
    \sum_{n=1}^h a_n \left[ (D \lambda_n^4 - \rho h \omega^2) N_{nm} + \int_{0}^{R} \psi_n \phi_m r dr \right] = \frac{\widehat{p}(\omega)}{2\pi} \phi_m(0)
\end{equation}
  \subsection{Evaluation of \texorpdfstring{$\widehat{S}_{km}(\omega)$}{Sₖₘ(ω)}}
Evaluation of the matrix $\mathbf{\widehat{S}}$ requires the computation of the improper integral:
\begin{equation}
    \widehat{S}_{km}(\omega+0^{+}) = \text{P.V.} \int_{0}^{\infty} \alpha_{HS}(\xi,\omega) g_{km}(\xi) \xi \, d\xi + i\pi A_{R}(\omega) g_{km}(\xi_{R}) \xi_{R}.
\end{equation}
The residue term is explicitly evaluated from the Laurent expansion at $\xi=\xi_R$, but the smooth portion of the principal value integral must be approximated. The principal value integrand has a simple pole at $\xi = \xi_R(\omega)$, as well as branch-point singularities at $\xi = k_L$ and $\xi = k_T$. Additionally, the integral is defined over an infinite domain. 
\\
\par First, the integral is truncated to a sufficiently large finite value $\xi_{\text{tail}} > \xi_R$.
The approximation becomes:

\begin{equation}
\widehat{S}_{km}(\omega+0^{+})
=
\text{P.V.}\int_{0}^{\xi_{\text{tail}}}
\alpha_{HS}(\xi,\omega)\,g_{km}(\xi)\,\xi\,d\xi
\;+\;
i\pi A_R(\omega)\,g_{km}(\xi_R)\,\xi_R
\;+\;\tau(\xi_{\text{tail}}),
\end{equation}

\[
\tau(\xi_{\text{tail}})
:=\int_{\xi_{\text{tail}}}^{\infty}
\alpha_{HS}(\xi,\omega)\,g_{km}(\xi)\,\xi\,d\xi.
\]
Since the integrand decays as $\mathcal{O}(\xi^{-3})$ for large $\xi$, the truncation error $\tau(\xi_{\text{tail}})$ introduces an algebraic error bounded by:
\begin{equation}
    |\tau(\xi_{\text{tail}})| \le C \int_{\xi_{\text{tail}}}^{\infty} \xi^{-3} \, d\xi = \mathcal{O}(\xi_{\text{tail}}^{-2}).
\end{equation}This decay follows from the large-$\xi$ asymptotic behavior of the Hankel
transforms appearing in $g_{km}(\xi)$.
Using the well-known decay of Bessel functions,
\[
J_\nu(x)=\mathcal O(x^{-1/2}), \qquad x\to\infty,
\]
we showed earlier that, for each fixed $n$,
\[
(\mathscr H_{[0,R]}\phi_n)(\xi)=\mathcal O(\xi^{-3/2}),
\qquad \xi\to+\infty.
\]
Since $g_{km}(\xi)$ consists of products of such Hankel transforms and
$\alpha_{HS}(\xi,\omega)=\mathcal O(\xi^{-1})$ as $\xi\to\infty$, we obtain
\[
\alpha_{HS}(\xi,\omega)\, g_{km}(\xi)\,\xi
= \mathcal O(\xi^{-3}),
\qquad \xi\to\infty.
\]
\par To deal with the pole at $\xi = \xi_R$,  we subtract the singular part of the integrand and add its contribution exactly. We previously used a similar technique to justify the limit $\mathbf{\widehat{S}}(\omega; 0^+)$. In both cases, we use the Laurent expansion around the pole to define $B(\xi, \omega)$. 
\begin{equation}
    B(\xi,\omega) = \alpha_{HS}(\xi,\omega) - \frac{A_R(\omega)}{\xi - \xi_R},
\end{equation}
where $A_R(\omega)$ is the residue of $\alpha_{HS}$ at $\xi_R$.
In section 4.3, we used a small indentation in our path of integration to avoid the cusps and pole, making the integrand analytic in the region enclosed by the contour, so we could apply Cauchy's integral theorem. Here, we do not use any contours in the complex plane. Instead, we separate $S_{km}(\omega + 0^+)$ into a well-defined classical integral of $\widetilde{B}(\xi, \omega)$ (which we approximate) plus the principal value integral of $\frac{g_{km}(\xi_R) \ \xi_R}{\xi - \xi_R}$ and the residue term from the limit as $\eta \to 0^+$ (both of which we compute exactly).
\begin{equation}
    \widehat{S}_{km}(\omega+0^{+}) = \int_{0}^{\xi_{\text{tail}}} B(\xi,\omega) g_{km}(\xi) \xi \, d\xi + A_R(\omega) \ \text{P.V.} \int_{0}^{\xi_{\text{tail}}} \frac{g_{km}(\xi) \ \xi}{\xi - \xi_R} \ d\xi + i\pi A_{R}(\omega) g_{km}(\xi_{R}) \xi_{R}.
\end{equation}
In the compact interval $[0, \xi_{\text{tail}}]$, part of the principal value integral can be evaluated analytically.
\begin{equation}
    \PV \int_{0}^{\xi_{\text{tail}}} \frac{g_{km}(\xi) \xi}{\xi - \xi_R} \, d\xi = \int_{0}^{\xi_{\text{tail}}} \frac{g_{km}(\xi)\xi - g_{km}(\xi_R)\xi_R}{\xi - \xi_R} \, d\xi + \PV \int_{0}^{\xi_{\text{tail}}} \frac{g_{km}(\xi_R)\xi_R}{\xi - \xi_R} \, d\xi
\end{equation}
The first integrand extends continuously across $\xi=\xi_R$ (and is smooth away from the branch points), so we modify $B(\xi, \omega)$ to include this term. Let
\[
\widetilde{B}(\xi, \omega) = B(\xi, \omega)g_{km}(\xi)\xi + A_R(\omega) \frac{g_{km}(\xi) \ \xi - g_{km}(\xi_R) \ \xi_R}{\xi - \xi_R}.
\]
The principal value integral can be evaluated explicitly, yielding
\begin{equation}
    \text{P.V.} \int_{0}^{\xi_{\text{tail}}} \frac{g_{km}(\xi_R) \ \xi_R}{\xi - \xi_R} \ d\xi = g_{km}(\xi_R) \ \xi_R \ \log\left|\frac{\xi_{\text{tail}} - \xi_R}{\xi_R}\right|
\end{equation}
Thus, we have
\begin{equation}
    \widehat{S}_{km}(\omega+0^{+}) = \int_{0}^{\xi_{\text{tail}}} \widetilde{B}(\xi,\omega) \, d\xi + A_R(\omega) g_{km}(\xi_R) \ \xi_R \ \left( \log\left|\frac{\xi_{\text{tail}} - \xi_R}{\xi_R}\right| + i\pi \right).
\end{equation}
The integrand $\widetilde{B}(\xi,\omega)$ is analytic for $\omega>0$
except at square-root branch points at $\xi = \pm k_L, \pm k_T$. The singularities at these points result in unbounded derivatives, and the standard a priori error estimates for the Gaussian quadrature would result in algebraic convergence rates.
To recover spectral convergence, we partition the integration domain $[0, \xi_{\text{tail}}]$ into four intervals: $\Gamma_1 = [0, k_L]$, $\Gamma_2 = [k_L, \xi_{\text{mid}}]$, $\Gamma_3 = [\xi_{\text{mid}}, k_T]$, and $\Gamma_4 = [k_T, \xi_{\text{tail}}]$, where $\xi_{\text{mid}} = (k_L + k_T)/2$. Each branch point $k_L$ and $k_T$ is treated from both sides by splitting the integration domain so that it appears once as a left endpoint and once as a right endpoint. For the purpose of estimating the Gauss-Legendre convergence, it is sufficient to identify, for each subinterval, the singularity of the transformed integrand that lies closest to the real interval $u\in[-1,1]$ in the complex plane. More distant singularities do not affect the admissible Bernstein ellipse and therefore do not influence the exponential convergence rate. This allows the use of quadratic coordinate transformations $[a, b] \mapsto [-1, 1]$, under which the integrand becomes analytic on $u \in [-1, 1]$. We define two mappings, $T_a$ for a left-endpoint singularity and $T_b$ for a right-endpoint singularity:
\begin{align}
    T_a(u; a, b) &= a + (b-a)\left(\frac{1}{2}(u+1)\right)^2, \quad \frac{d T_a}{du} = \frac{1}{2}(b-a)(u+1), \\
    T_b(u; a, b) &= b - (b-a)\left(\frac{1}{2}(u+1)\right)^2, \quad \frac{d T_b}{du} = -\frac{1}{2}(b-a)(u+1).
\end{align}
\begin{remark}
\label{remark:negative_map}
The second coordinate transformation is decreasing in $u$, the interval is reflected.
\[
T_b(-1; a, b) = b, \quad T_b(1; a, b) = a.
\]
The bounds of integration would be reversed,
\[
\int_{a}^{b} f(\xi) \, d\xi = \int_{1}^{-1} f(T_b(u; a, b)) \frac{d T_b}{du} \, du = \int_{-1}^{1} f(T_b(u; a, b)) \left(-\frac{d T_b}{du}\right) \, du.
\]
So for the remainder of the section, we will write the integral with $\left(-d T_b/du\right)$ to fix the orientation of the integral.
\end{remark}

The benefit of substituting $\xi = T_a(u; a, b)$ or $\xi = T_b(u; a, b)$ is to transform the integrand on each interval to be analytic in a complex neighborhood of $[-1, 1]$ and thus analytic in a region of the complex plane enclosed by a Bernstein ellipse, so that we can apply the standard error estimates for Gauss-Legendre quadrature with exponential convergence.

We follow the analysis of Trefethen and define the Bernstein ellipse $E_{\rho}$ with foci at $\pm 1$ as the image of the circle $|z| = \rho > 1$ under the Joukowsky map
\[
E_{\rho} = \left\{ \frac{1}{2} \left( z + \frac{1}{z} \right) : |z| < \rho \right\}.
\]
\paragraph{$\Gamma_1$}
On the interval $\Gamma_1 = [0, k_L]$, the singularity lies at the right endpoint. We use the substitution $\xi = T_b(u; 0, k_L)$ to map the interval $[0, k_L]$ to $u \in [-1, 1]$. We have four singularities in the integrand to consider. Two come from the endpoint factor $\sqrt{\xi^2 - k_L^2}$, and the others from $\sqrt{\xi^2 - k_T^2}$. We consider the singularities separately.
\begin{align}
    -k_L = T_b(u; 0, k_L) = k_L - k_L\left(\frac{1}{2}(u+1)\right)^2 &\implies u = \pm 2\sqrt{2} - 1, \\
    k_L = T_b(u; 0, k_L) = k_L - k_L\left(\frac{1}{2}(u+1)\right)^2 &\implies u = -1.
\end{align}
The singularity we find for the plus case $u = -1$ is removed by the coordinate transformation. The square root factor in the variable $u$
\[
\sqrt{\xi^2-k_L^2}=\left(\frac{1}{2}(u+1)\right)\,\sqrt{k_L^2\left(\frac{1}{2}(u+1)\right)^2-2k_L^2}.
\]
The other points to consider are outside the interval $[0, k_L]$, but here we care about the admissible Bernstein ellipse, which extends past the intervals, so we must consider all singularities in the $u$-plane. The factor $\sqrt{\xi^2 - k_T^2}$ has singularities at $\xi = \pm k_T$.
\begin{align}
    -k_T = T_b(u; 0, k_L) = k_L - k_L\left(\frac{1}{2}(u+1)\right)^2 &\implies u = \pm 2\sqrt{\frac{k_T + k_L}{k_L}} - 1, \\
    k_T = T_b(u; 0, k_L) = k_L - k_L\left(\frac{1}{2}(u+1)\right)^2 &\implies u = \pm 2i\sqrt{\frac{k_T - k_L}{k_L}} - 1.
\end{align}
A similar computation for the other intervals shows that for all $\omega > 0$, there is a finite number of singularities $S_p = T^{-1}(\{\pm k_L, \pm k_T\}) \setminus \{-1\}$, all of which are a positive distance away from the interval $[-1, 1]$ in the $u$-plane (the singularity at $u = -1$ is removed by the coordinate transformation). Therefore, when $\omega > 0$, the integrand $\widetilde{B}(\xi(u), \omega)$ on each interval $\Gamma_p$ is analytic in a complex neighborhood of $[-1, 1]$ in the $u$-plane.

We remind the reader that the boundary of the Bernstein ellipse $E_{\rho}$ is given by the Joukowsky map of the circle $|z| = \rho$.
\begin{remark}
    The Joukowsky map is invariant under $z \mapsto z^{-1}$. Therefore, the inverse admits two values $z_{1}$ and $z_{2}$ such that $z_{2} = 1/z_{1}$.
\end{remark}
In other words, given a point $u_p$ in the $u$-plane corresponding to a singularity, we can find two points in the $z$-plane that map to $u_p$ under the Joukowsky map.
If $u_p$ is a point in the $u$-plane, then its preimages $z_p$ are given by the inverse Joukowsky map:
\begin{equation}
    z_p = u_p \pm \sqrt{u_p^2 - 1}
\end{equation}
The absolute value of these points $|u_p \pm \sqrt{u_p^2 - 1}|$ gives us the radii of the two circles in the $z$-plane, which under the Joukowsky map both map to a single Bernstein ellipse whose boundary passes through the singularity $u_p$. Therefore, we can choose the greater of these two radii to be our radius $\rho_p$. 
Let $\rho(u)$ map a point $u$ in the $u$-plane to the radius of the Bernstein ellipse whose boundary passes through $u$.
\[
\rho(u) = \max\left(\left|u + \sqrt{u^2 - 1}\right|, \left|u - \sqrt{u^2 - 1}\right|\right)
\]
For each interval $\Gamma_p$, we can define the largest Bernstein ellipse $E_{\rho_p}$ that contains no singularities by
\begin{equation}
    \rho_p = \min_{u \in S_p} \rho(u).
\end{equation}

\begin{claim}
    For all $\omega > 0$ and interval $\Gamma_p$, the radius $\rho_p$ defined above satisfies $\rho_p > 1$. 
\end{claim}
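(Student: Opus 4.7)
The strategy is to reduce the claim to showing that each element of the finite set $S_p$ is either non-real in the $u$-plane, or real with modulus strictly greater than $1$, and then verify this by a short case split over $p=1,2,3,4$.

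First, I would note that the function $\rho(u)$ defined via the two inverse branches of the Joukowsky map satisfies $\rho(u)=1$ precisely when $u\in[-1,1]$ and $\rho(u)>1$ otherwise. Consequently, $\rho_p=\min_{u\in S_p}\rho(u)>1$ is equivalent to $S_p\cap[-1,1]=\emptyset$. Since $S_p$ has at most seven elements (at most two preimages for each of the four branch points $\pm k_L,\pm k_T$, minus the removed value $u=-1$), this equivalence reduces the claim to a finite statement about where the preimages of $\{\pm k_L,\pm k_T\}$ lie in the $u$-plane.

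Next, for each interval $\Gamma_p$, I would solve $\xi_\ast=T(u)$ explicitly for every $\xi_\ast\in\{\pm k_L,\pm k_T\}$ other than the active endpoint. Since both $T_a$ and $T_b$ are quadratic in $\tfrac{1}{2}(u+1)$, this amounts to computing a single real number $s:=\bigl(\tfrac{u+1}{2}\bigr)^2$ for each choice of $\xi_\ast$. If $s<0$, the two preimages $u=-1\pm 2i\sqrt{-s}$ have nonzero imaginary part; if $s>1$, the two preimages $u=-1\pm 2\sqrt{s}$ are real with $|u|>1$. The case $0\le s<1$ cannot occur for a non-endpoint branch point, because it would require that branch point to lie strictly inside the interval $(a,b)$, contradicting the construction of the partition in which the only branch point contained in any $\overline{\Gamma_p}$ is the active endpoint itself. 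The marginal case $s=1$ would force the opposite endpoint to also be a branch point; this is excluded by the choice $\xi_{\text{mid}}=(k_L+k_T)/2$, which places $\xi_{\text{mid}}$ strictly between $k_L$ and $k_T$, together with the strict inequalities $0<k_L<k_T<\xi_{\text{tail}}$.

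The explicit verifications then follow the template already carried out in the excerpt for $\Gamma_1$. For $\Gamma_2$ with $T_a$, using $\xi_{\text{mid}}-k_L=(k_T-k_L)/2$, one obtains $s=2$ from $\xi_\ast=k_T$ and $s<0$ from $\xi_\ast\in\{-k_L,-k_T\}$. For $\Gamma_3$ with $T_b$, symmetrically $s=2$ from $\xi_\ast=k_L$ and $s>1$ from $\xi_\ast\in\{-k_L,-k_T\}$. For $\Gamma_4$ with $T_a$ every remaining $\xi_\ast\in\{k_L,-k_L,-k_T\}$ yields $s<0$ because $\xi_{\text{tail}}-k_T>0$ and each of $k_L-k_T$, $-k_L-k_T$, $-2k_T$ is negative. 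In every case each $u\in S_p$ lies in $\mathbb{C}\setminus[-1,1]$, so continuity of $\rho$ and finiteness of $S_p$ give $\rho_p>1$. The main obstacle is purely bookkeeping; no conceptual subtlety appears beyond maintaining the strict ordering of the branch points and of $\xi_{\text{mid}}$.
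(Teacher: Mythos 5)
Your proof is correct and takes essentially the same route as the paper: both reduce the claim to $S_p\cap[-1,1]=\emptyset$ via the Joukowsky-map characterization $\rho(u)>1 \iff u\notin[-1,1]$, and both verify this by computing the preimages of $\pm k_L,\pm k_T$ under the endpoint-regularizing maps $T_a, T_b$. The paper works out $\Gamma_1$ and asserts ``a similar computation'' for $\Gamma_2$--$\Gamma_4$; you spell out all four cases and add the general structural observation that $0<s<1$ would force a branch point strictly inside a subinterval, contradicting the partition --- a useful consolidation, but not a different idea.
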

\begin{proof}
    Let $u_p$ be the singularity in $S_p$ that minimizes $\rho(u)$. 
    \[
    u_p = \argmin_{u \in S_p} \rho(u).
    \]
    We know that there are two points $z_{p1}, z_{p2}$ in the $z$-plane that map to $u_p$ under the Joukowsky map. Due to the symmetry of the map, we know that $z_{p2} = 1/z_{p1}$. Therefore, if one of these points is inside the unit circle, the other must be outside. We choose the maximum, so $\rho(u_p) > 1$. 
    
    $z_{p1}$ and $z_{p2}$ cannot both be on the unit circle, because if they were, then $u_p$ would be in the interval $[-1, 1]$. However, we know that all singularities $u_p$ lie outside this interval for all $\omega > 0$. Therefore $\rho_p > 1$.
\end{proof}

We define the transformed integrands $F_P(u, \omega)$. Here we write $\left(-d T_b / du \right)$ to fix the orientation of the integral (see Remark \ref{remark:negative_map}).
\begin{align}
    F_{1}(u, \omega) &= -\widetilde{B}\left(T_b(u; 0, k_L), \omega \right) \frac{d T_b}{du}, \quad u \in [-1, 1], \\
    F_{2}(u, \omega) &= \widetilde{B}\left(T_a(u; k_L, \xi_{\text{mid}}), \omega \right) \frac{d T_a}{du}, \quad u \in [-1, 1], \\
    F_3(u, \omega) &= -\widetilde{B}\left(T_b(u; \xi_{\text{mid}}, k_T), \omega \right) \frac{d T_b}{du}, \quad u \in [-1, 1], \\
    F_{4}(u, \omega) &= \widetilde{B}\left(T_a(u; k_T, \xi_{\text{tail}}), \omega \right) \frac{d T_a}{du}, \quad u \in [-1, 1].
\end{align}
Where $F_P(u, \omega)$ is analytic for $\omega > 0$ and $u \in E_{\rho_p}$.

Standard results for Gauss-Legendre quadrature state that if the integrand is analytic inside the Bernstein ellipse $E_{\rho}$ for some $\rho > 1$, then the error in approximating the integral by $N$-point Gauss-Legendre quadrature decays exponentially with $N$ at a rate determined by $\rho$.
The claim above shows that for each interval $\omega > 0$, there exists a Bernstein ellipse $E_{\rho_p}$ for each interval $\Gamma_p$ such that the integrand $F_P(u, \omega)$ is analytic inside $E_{\rho_p}$. 

\begin{remark}
    As $\omega \to 0$, $k_L, k_T \to 0$ which causes the singularities in the $u$-plane for $\Gamma_2$ and $\Gamma_4$ to approach the real axis at $u = -1$. This causes the corresponding preimages in the $z$-plane to approach $|z| = 1$, which causes $\rho \to 1$. Thus, for small frequencies, the convergence rate will slow down. In practice, we compute the S-matrix at $\omega = 0$ separately using the limit of $\alpha_{HS}(\xi, \omega)$ as $\omega \to 0$. It can be shown that
    \[
    \lim_{\omega \to 0} \alpha_{HS}(\xi, \omega) = \frac{C_{L}^{2}}{2 \mu \xi \left(C_{L}^{2} - C_{T}^{2}\right)}
    \]
    which means
    \[
    \lim_{\omega \to 0} \widehat{S}_{km}(\omega + 0^{+}) = \frac{C_{L}^{2}}{2 \mu \left(C_{L}^{2} - C_{T}^{2}\right)} \int_{0}^{\infty} g_{km}(\xi) \, d\xi
    \]
    and $g_{km}(\xi)$ is entire. In this case, we use the same truncation $\xi_{\text{tail}}$ but use the Gauss-Legendre quadrature with points scaled to $[0, \xi_{\text{tail}}]$ to compute the integral. 
\end{remark}

\paragraph{Quadrature Rule}
We generate a Gauss-Legendre quadrature rule for each of the four intervals. The total number of quadrature points is $N = N_1 + N_2 + N_3 + N_4$. We approximate the integral with the sum
\[
\sum_{p=1}^{4} \sum_{j=1}^{N_p} w_{j,p} F_{p}(u_{j}, \omega) 
\]
where the grid points $\{u_{j}\}_{j=1}^{N_p}$ are zeros of the $N_p$-th Legendre polynomial on $[-1, 1]$ and the weights $\{w_{j,p}\}_{j=1}^{N_p}$ are the standard Gauss-Legendre weights.
\paragraph{A priori Error}
Define the Gauss-Legendre quadrature error in the interval $\Gamma_P$ as
\begin{equation}
    R_{P} = \int_{-1}^{1} F_{P}(u, \omega) \, du - \sum_{j=1}^{N_p} w_{j,P} F_{P}(u_{j}, \omega)
\end{equation}
Since endpoint transformations remove square root cusps, the transformed integrand $F_P(u,\omega)$ is analytic on $u \in [-1, 1]$ and extends analytically to the open region enclosed by $E_{\rho_p}$. Standard results on the Gauss quadrature for analytic functions (see Trefethen~\cite[Thm.19.3]{Trefethen2019}) then imply geometric convergence. Specifically, we have the error estimate:
\begin{equation}
\label{eq:error}
    |R_{p}| \leq \frac{64}{15} \frac{M_{p} \rho_p^{-2N_p}}{\rho_p^2 - 1}
\end{equation}
Where $M_{p} = \max_{z \in E_{\rho_p}} |F_{P}(z, \omega)|$.

\begin{figure}
    \centering
    \includegraphics[width=0.75\linewidth]{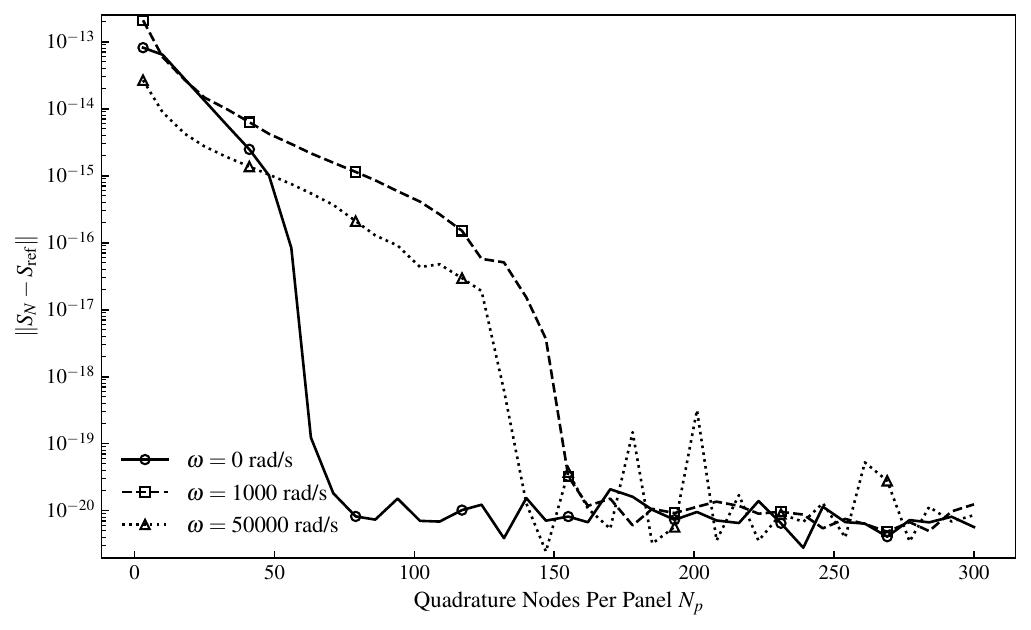}
    \caption{Spectral convergence of the discretized $\mathbf{\widehat{S}}$-matrix to a reference solution $\mathbf{\widehat{S}}_{\mathrm{ref}}$. $\xi_{\text{max}} = 2200, R = 0.0762$m}
    \label{fig:S_conv}
\end{figure}

In the absence of an exact solution, we compute a reference solution $\mathbf{\widehat{S}_{\mathrm{ref}}}$ with a large number of nodes. For these and all subsequent numerical experiments, we adopt the physical constants defined in Chen et al.~\cite[Table 2]{Chen1988}. We estimate the error in a solution $\mathbf{\widehat{S}}_N$ at a fixed $\omega$ by $\|\mathbf{\widehat{S}}_N(\omega) - \mathbf{\widehat{S}}_{\mathrm{ref}}(\omega)\|$\footnote{The matrix norm used here is $\|A\| = \sqrt{\sum_i \sum_j |a_{ij}|^2}$}. Geometric convergence in $N$ is observed, consistent with~\eqref{eq:error}.
For smaller $R$ this regime is reached for fewer nodes, since the integrand is less oscillatory. This is illustrated by the log-linear plot~\ref{fig:S_conv}, which has three curves corresponding to $\omega=0$, $\omega = 1000$, and $\omega = 50000$. Before reaching the machine precision limit, the plot shows two distinct regimes of convergence, highlighting the impact of oscillations in the integrand. 

Lesser values of $N$ (visually estimated as $N < 120$ for the dynamic case and $N < 50$ for the static case) show the pre-asymptotic region, characterized by geometric convergence but at a rate significantly slower than what the theoretical $\rho$ predicts. This slower convergence is likely due to aliasing caused by the highly oscillatory integrand. 
Once there are enough points to sample the oscillations sufficiently, we observe a much steeper convergence that is much closer in slope to the predicted $\rho$.

In further support of this aliasing hypothesis, figure ~\ref{fig:R_conv} has four lines corresponding to varying $R$ values. As $R$ increases, the integrand becomes more oscillatory due to terms like $J_0(R\xi)$; in turn, it takes a larger $N$ to resolve the highly oscillatory integrand.
\begin{figure}[!h]
    \centering
    \includegraphics[width=0.75\linewidth]{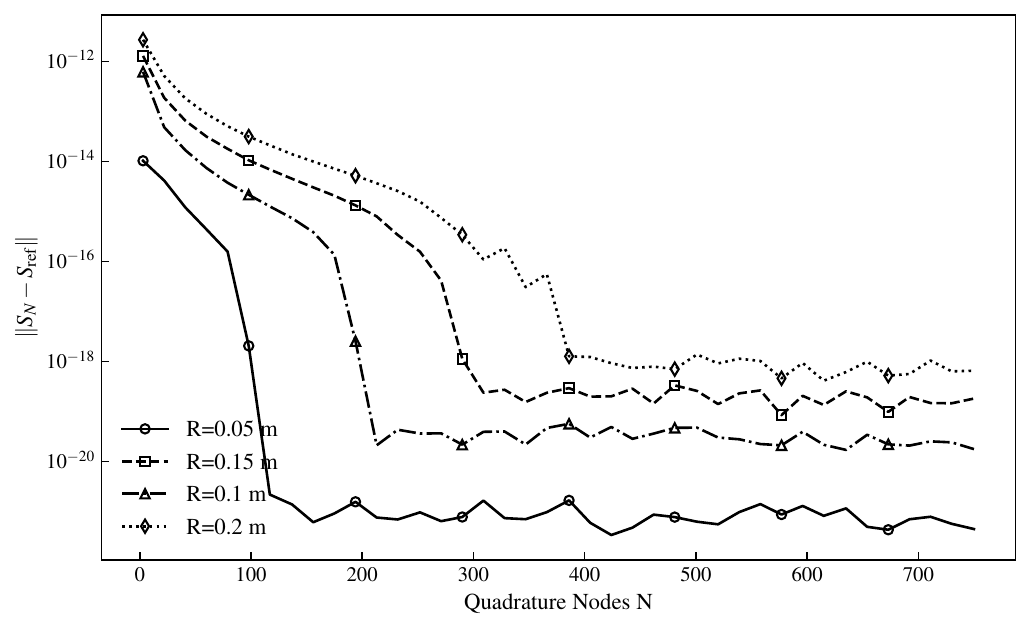}
    \caption{Error over number of quadrature nodes for various values of $R$, for fixed $\omega = 1000$ and $\xi_{\text{max}} = 2800$}
    \label{fig:R_conv}
\end{figure}
Furthermore, it appears that the error saturation floor rises as $R$ increases. This is also likely a result of the worsening oscillations for larger $R$. As $R$ increases, not only does the frequency of the oscillations increase, but the magnitude of the oscillations also increases. This introduces catastrophic cancellation errors inherent to floating-point arithmetic that cannot be surpassed by a more refined quadrature.

\subsection{Inversion and Numerical Stability}
After computing all entries of the $\widehat{\mathbf{S}}$ matrix via the quadrature scheme described above, we can solve the linear system in~\eqref{eq:sys_S} for the coefficients $C_{nm}$. We reconstruct the functions $\psi_n(r)$ from these coefficients.
\[
\psi_n(r) = \sum_{m=1}^{N} C_{nm} \phi_m(r)
\]
Finally, we solve for the expansion coefficients $a_n$ of the displacement by solving the system:
\begin{equation}
    \sum_{n=1}^{N} a_n K_{nm} = \frac{\widehat{p}(\omega)}{2\pi} \phi_m(0)
\end{equation}
\begin{equation}
    K_{mn} = \left(D \lambda_n^4 - \rho h \omega^2\right) N_{mn} + \int_{0}^{R} \psi_n(r) \phi_m(r) r \, dr
\end{equation}

For small $R$ ($R < 1$), the condition number is small for both matrices, and the systems were solved using LU decomposition with partial pivoting, implemented in the LAPACK routine \texttt{GESV}~\cite{LAPACK}
For larger $R$, the condition number of $\mathbf{\widehat{S}}$ increases, and we found it more stable to accept a least squares solution via the LAPACK routine \texttt{gelsd}\footnote{Both routines were accessed via SciPy's \texttt{scipy.linalg} module~\cite{Scipy}.}.

\section{Finite-Radius Results}
We now present representative numerical results illustrating the effect of finite plate radius on the plate-half-space interaction. Figures \ref{fig:small_R_strain} and \ref{fig:Large_R_strain} display the radial strain over time at a fixed point $r=12.7$ mm for two distinct radii.

Figure \ref{fig:small_R_strain} presents the radial strain in a plate of radius $R=76.2$ mm, chosen to match the experimental setup detailed in \cite{Chen1988}. 
The plate loading~\ref{eq:plate_time}
\[
    p(t) = \frac{1}{2} F_0\left(1-\cos\left(\frac{2 \pi t}{T_0}\right)\right), \quad t \in [0, T_0]
\]
where $F_0$ is the amplitude, and $T_0$ is the contact duration both chosen to be  the values determined in~\cite{Chen1988}.
The computed signal exhibits good qualitative agreement with the experimental data reported in \cite{Chen1988}. The signal captures the initial peak from the impact, followed by oscillations. These later-time oscillations are characteristic of the finite plate; they correspond to waves reflecting off the free edge $r=R$ and returning to the observation point.

\begin{figure}[!h]
    \centering
    \includegraphics[width=0.75\linewidth]{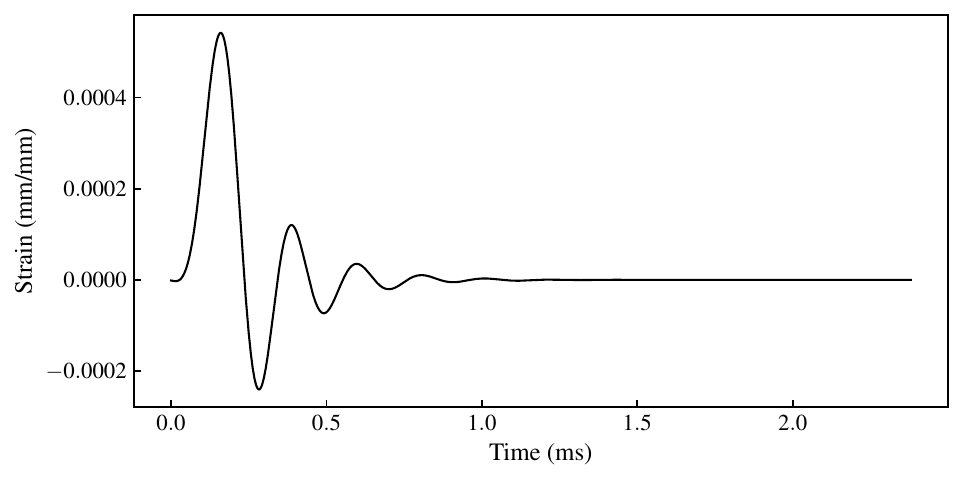}
    \caption{Radial strain in a plate of radius  $R=76.2$mm at the point $r=12.7$mm over time.}
    \label{fig:small_R_strain}
\end{figure}
 \subsection{Comparison With the Infinite-Plate Model}
Figure \ref{fig:Large_R_strain} presents the response for a large plate $R=1000$mm, also observed at fixed $r=12.7$mm. This strain signal has a single peak at the initial impact and closely matches the infinite-radius plate analysis conducted in \cite{Chen1988}. Notably, we do not observe the oscillations seen in the small radius case. More pronounced geometric spreading results in a much weaker wave reaching the boundary. Furthermore, the waves continuously radiate energy into the half-space as they travel toward the boundary.
\begin{figure}[!h]
    \centering
    \includegraphics[width=0.75\linewidth]{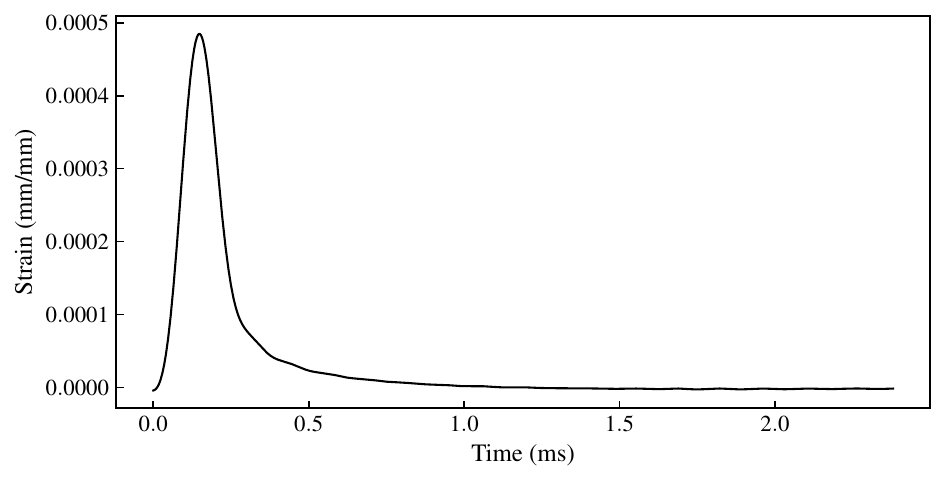}
    \caption{Radial strain in a plate of radius  $R=1000$mm at the point $r=12.7$mm over time.}
    \label{fig:Large_R_strain}
\end{figure}

Spectral expansions ideally yield exponential convergence for analytic functions. However, in our numerical experiments, we observe algebraic decay in the coefficients $a_n(\omega)$ (the graph makes roughly a straight line on the log-log plot in figure~\ref{fig:a_n_conv}). This behavior is due to the singular nature of the contact problem at the edge of the plate. The plate-half-space interaction is a mixed boundary value problem on the soil; the traction field $\widehat{q}(r, \omega)$ is only supported on the contact region. The result, as established in classical contact mechanics~\cite[\S3.4]{Johnson1985}, is typically a square-root singularity of the form $(R^2 - r^2)^{-1/2}$ at the boundary of the contact region ($r=R$).

\begin{figure}[!h]
    \centering
    \includegraphics[width=0.75\linewidth]{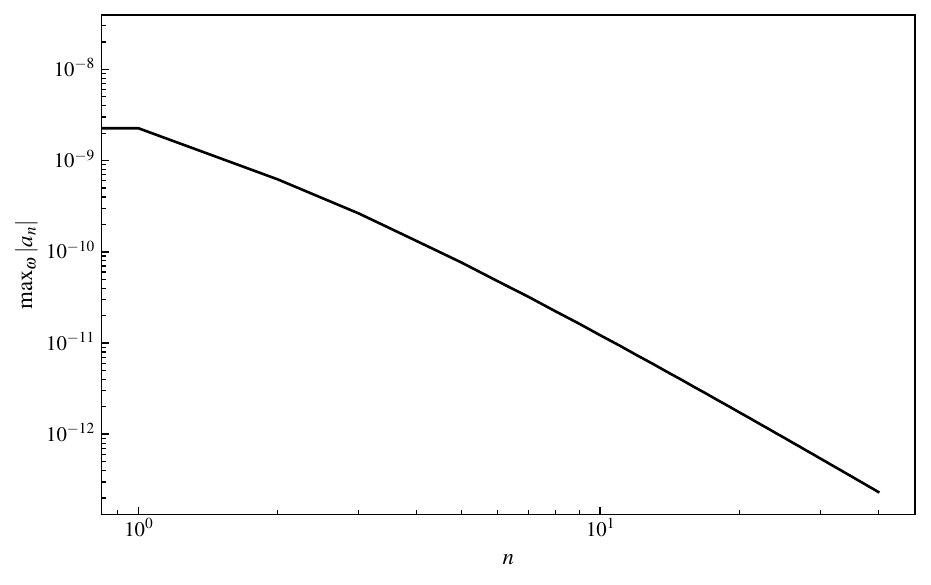}
    \caption{Decay of series coefficients $a_n$ for $R=76.2$ mm}
    \label{fig:a_n_conv}
\end{figure}

The convergence rate of an expansion is limited by regularity. Attempting to approximate a singular function using a basis of smooth functions inevitably results in algebraic decay of the coefficients ($O(n^{-k})$) rather than exponential decay, as well as persistent oscillations (Gibbs phenomenon) in the reconstructed traction field near the discontinuity $r=R$.

To connect with the classical theory, we examine the large-radius limit. As $R\to\infty$, $\chi_{[0,R]}$ converges strongly to the identity on $L^2(0,\infty)$ (with radial weight), and $\mathcal M(\omega)$ approaches the
infinite-domain Lamb operator $T(\omega)$. In this regime, the off-diagonal entries of the modal matrix become small, and the formulation asymptotically recovers the diagonal structure of the infinite-plate model. We quantify the
finite-radius correction by comparing representative entries and operator norm surrogates of $\widehat{\mathbf S}(\omega)$ against their infinite-domain counterparts, and show that the discrepancy is most pronounced at low modes and low to moderate frequencies (figure~\ref{fig:s_diff}), where truncation effects are not negligible. We compute $\widehat{\mathbf S}(\omega)$ twice, once with $R = 800$ mm and the other computed with $R=76.2$, denoted $S_1$ and $S_2$ respectively.
\begin{figure}[!h]
    \centering
    \includegraphics[width=0.75\linewidth]{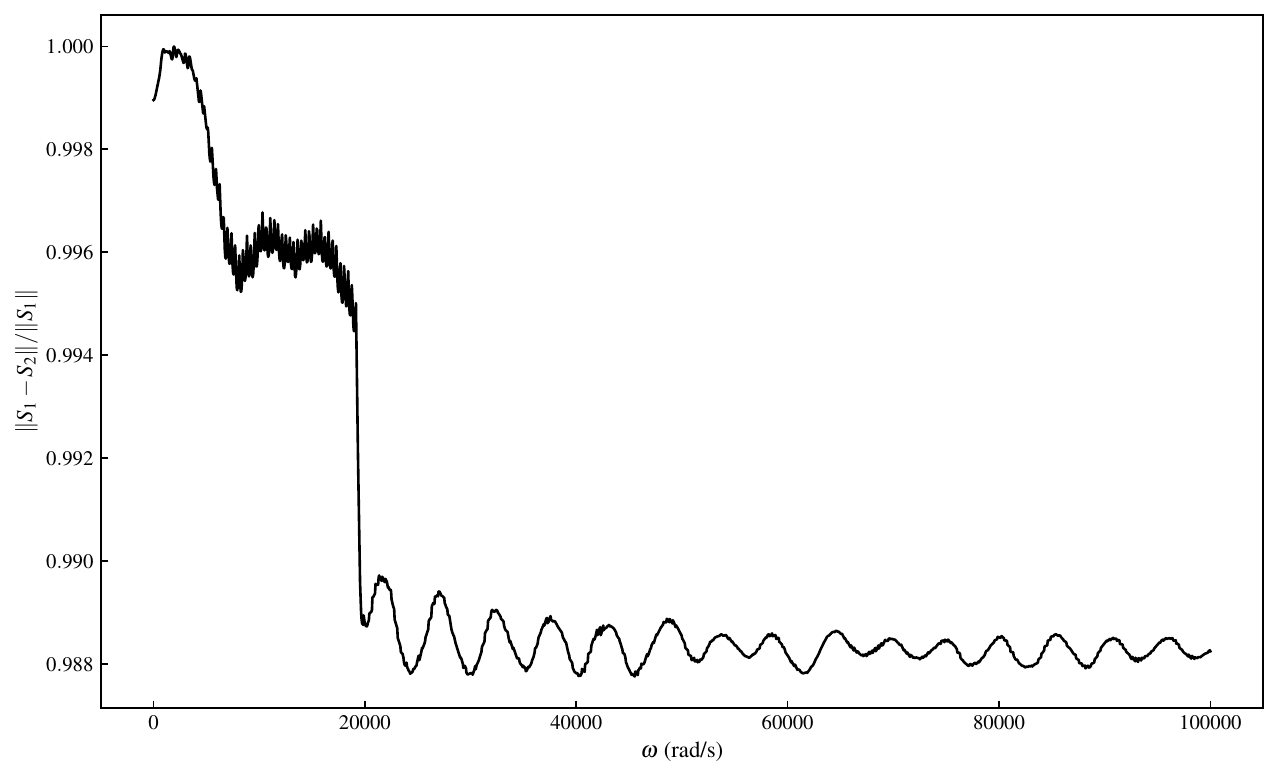}
    \caption{Frobenius norm of relative difference between $S_1$ and $S_2$}
    \label{fig:s_diff}
\end{figure}

\subsection{Displacement}
We next translate the operator formulation into physically interpretable quantities. For each driving frequency $\omega$, the coupled plate-half-space system yields the frequency-domain plate deflection $\widehat w(r,\omega)$ on $0\le r\le R$. We report both pointwise and integral response measures, such as the center displacement $w(0,t)$ and the kinetic energy $K(\omega)$ of the thin plate. We define the kinetic energy:
\[
K(\omega) = \pi \rho h \omega^2 \int_0^R |\widehat{w}(r, \omega)|^2 r dr.
\]

\begin{figure}[!h]
    \centering
    \includegraphics[width=0.75\linewidth]{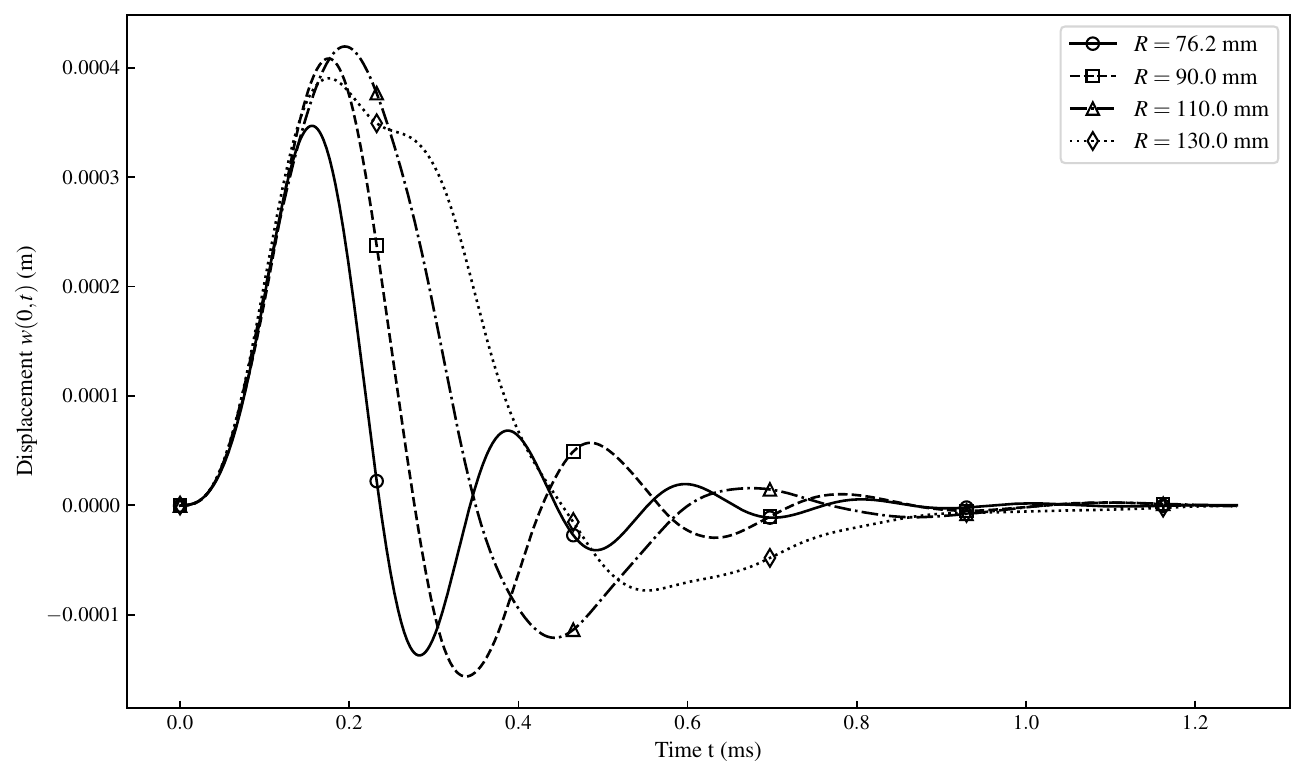}
    \caption{Center displacement over time for multiple values of $R$}
    \label{fig:center_displacement}
\end{figure}

\begin{figure}[!h]
    \centering
    \includegraphics[width=0.75\linewidth]{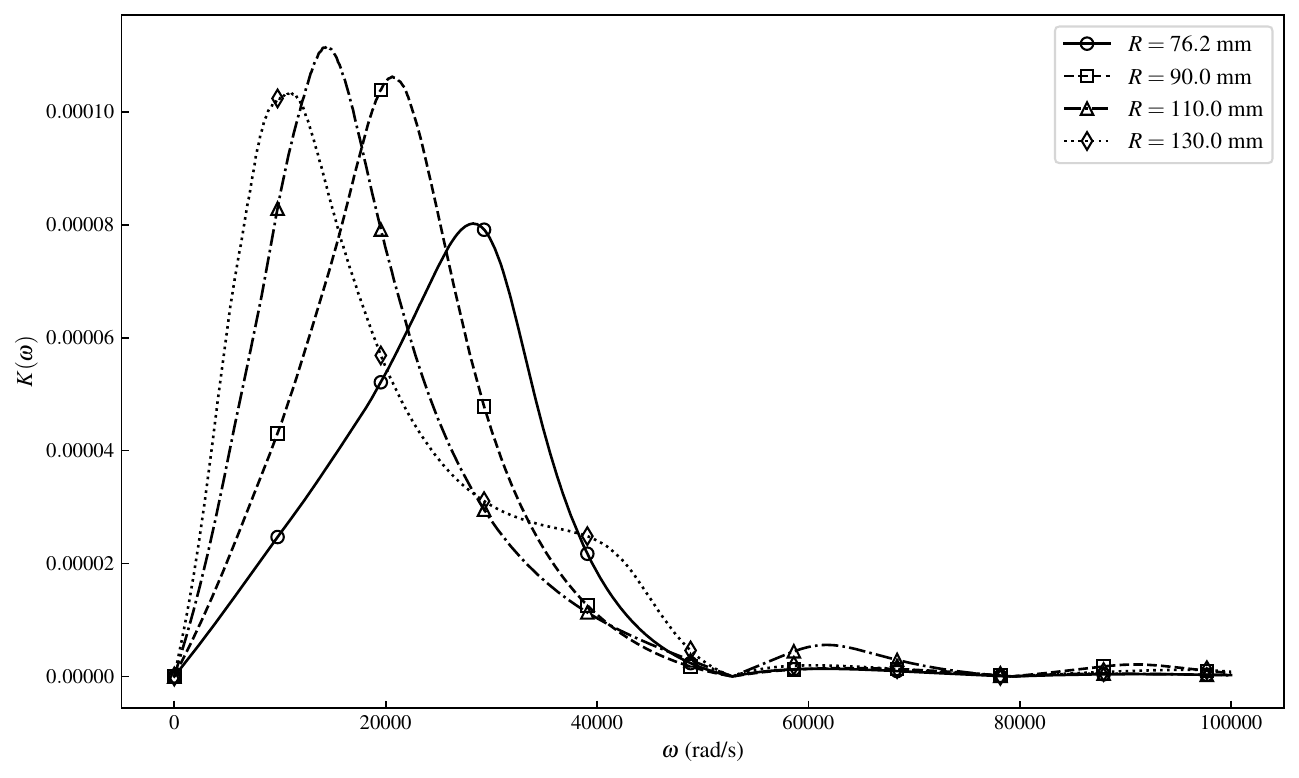}
    \caption{Kinetic energy of the plate in the frequency domain}
    \label{fig:energy}
\end{figure}

Figure~\ref{fig:center_displacement} reports the center displacement $w(0, t)$ over time for various values of $R$. As $R$ increases, the oscillations after the initial impact are delayed because the reflection off the boundary must travel further. The amplitude of the oscillations also decreases as $R$ increases due to radiating more energy into the soil. Figure~\ref{fig:energy} reports the kinetic energy of the plate in the frequency domain. As $R$ increases, the energy is more concentrated at the lower frequencies. We also reconstruct the resulting displacement in the soil using Lamb's solution
\[
\widehat{w}^{\,0}(\xi, \omega) = \widehat{q}^{\,0}(\xi, \omega)\frac{\alpha}{\mu \Omega} \left[\left(2 \xi^2 - k_T^2 \right)e^{-\alpha z} - 2\xi^2e^{-\beta z} \right]
\]
with our computed traction. After inverting the transforms, we plot wave propagation in the soil in figure~\ref{fig:soil_wave}.
\begin{figure}[!h]
    \centering
    \includegraphics[width=0.75\linewidth]{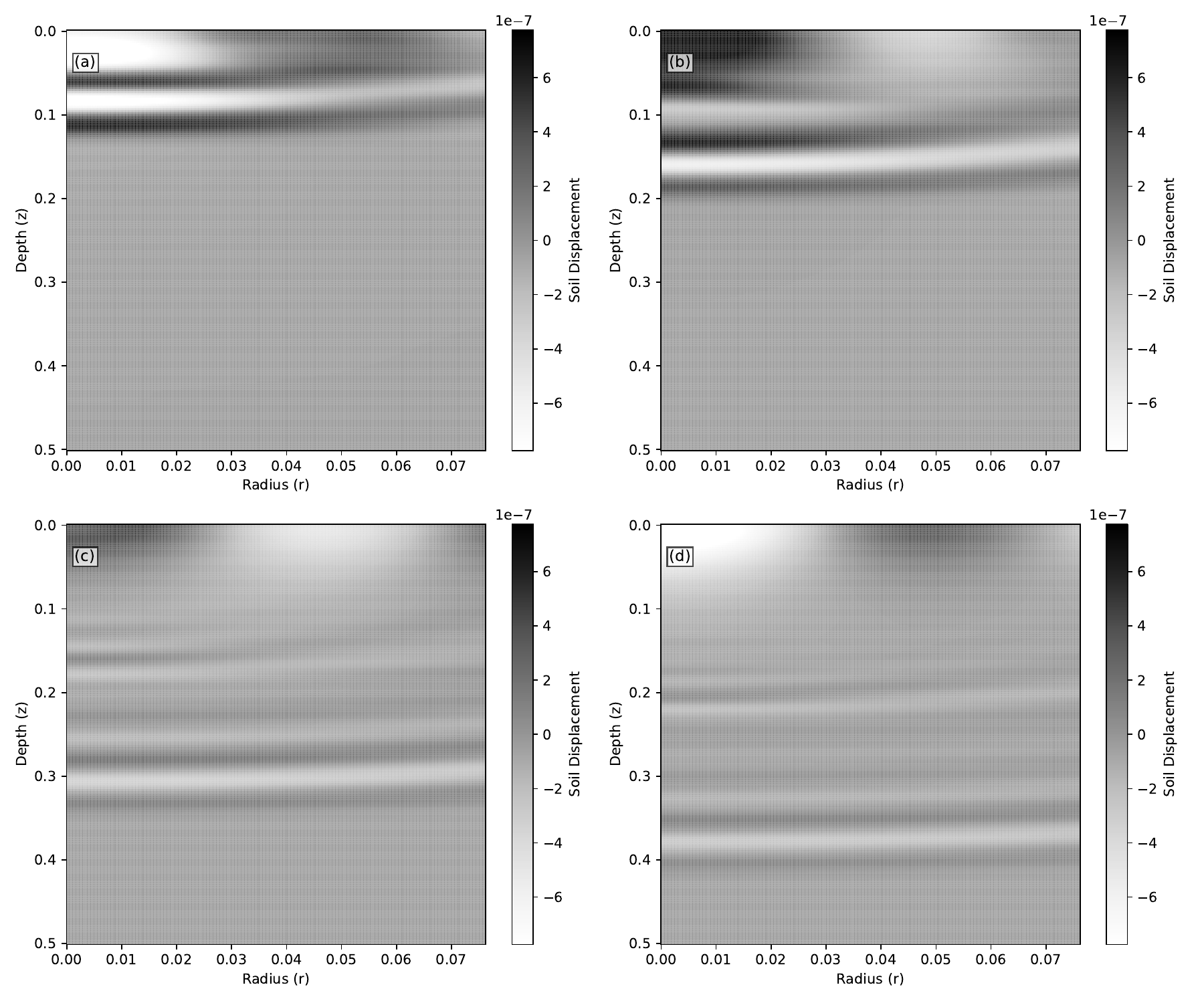}
    \caption{Displacement of soil at times (a) $t = 0.60$ ms, (b) $0.89$ ms, (c) $1.49$ ms, and (d) $1.79$ ms}
    \label{fig:soil_wave}
\end{figure}

\section{Frequency-Domain Amplitude Comparison}
In figure~\ref{fig:strain_omega}, we examine the strain amplitude in the frequency domain and compare it with Chen et al.’s experimental results. For large radii $R$, the computed signal is very close to the strain under the infinite plate assumption. For a smaller $R$, the computed signal indicates less high-frequency vibration than the experimental data (\cite[Fig. 8]{Chen1988}). The observed differences are not surprising and are most likely due to experimental factors such as measurement noise. These effects tend to influence the high-frequency portion of the spectrum more strongly. In contrast, the corresponding time-domain strain signals show much better agreement in the physically relevant part of the response, suggesting that the model accurately captures the system's dominant dynamics.

\begin{figure}[!h]
    \centering
    \includegraphics[width=0.75\linewidth]{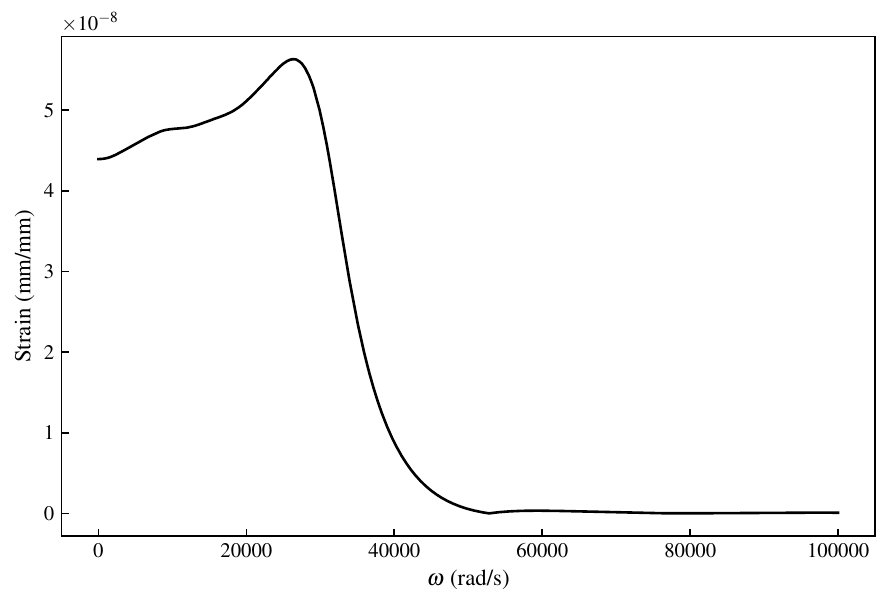}
    \caption{Radial strain in a plate of radius $R=76.2$ mm at the point $r=12.7$ in frequency}
    \label{fig:strain_omega}
\end{figure}

In this work, Fourier transforms are evaluated by using a direct numerical integration of the defining continuous integral rather than a fast Fourier transform (FFT). Although FFTs provide an efficient way to compute discrete Fourier transforms of sampled data, both approaches approximate the same underlying continuous transform when consistent normalization and windowing conventions are used. A detailed, fully quantitative comparison with experimental frequency-domain data, accounting for preprocessing choices and noise characteristics, is beyond the scope of this study and will be addressed in future work.

\section{Discussion and Extensions}
The operator-based formulation developed here makes the essential differences between finite-radius configurations and the infinite-plate model transparent. Restricting the Lamb operator to a bounded disk eliminates the Hankel diagonalization available on the full half-line and leads to genuinely nonlocal coupling between modes. At the same time, the analytic structure of the Lamb kernel introduces radiation effects through its poles and branch cuts. Both features are inherent in physically realistic geometries for which the infinite-domain approximation is not always appropriate.

A natural direction for future work is a detailed spectral analysis of the truncated operator $\mathcal M(\omega)$, including its eigenvalue structure and its dependence on the radius and frequency of the plate. The present framework can also admit possible natural extensions. One direction is the treatment of non-axisymmetric loading, which can be incorporated by including higher-order angular Fourier–Bessel modes. Finally, the frequency-domain formulation can be inverted to the time domain to investigate transient loading and impact problems, where the combined effects of modal coupling and radiation damping are
expected to play a significant role. These directions can be pursued in future work.

Another potential avenue for future work is the restoration of spectral convergence. The algebraic decay observed is a consequence of expanding a singular traction field in a smooth Bessel basis. This limitation could be addressed by using a mapped grid designed for endpoint singularities.

\section{Conclusions}
In this work, we develop an exact operator formulation for the dynamic
interaction of a finite circular elastic plate with an elastic half-space
by introducing the truncated Lamb operator
$\mathcal M(\omega)=\chi_{[0,R]}T(\omega)\chi_{[0,R]},$ acting on radial functions supported on $[0,R]$. Representing the operator $\mathcal M(\omega)$ on a finite-disk Bessel basis leads to a matrix representation in which the truncation to $[0,R]$ prevents diagonalization, seen in the infinite-radius setting. As a result, no individual mode evolves independently, and the resulting modal matrix contains nonzero off–diagonal entries.

Although this matrix is dense, its entries decay rapidly with increasing mode number, and the representation converges spectrally. Each matrix element is expressed as an improper integral, interpreted via the limiting absorption principle, which separates the reactive response of the half-space from explicit radiation contributions associated with Rayleigh surface waves and bulk-wave emission. In the limit as the radius of the plate $R\to\infty$, the formulation converges to the infinite plate model.

This work aims to develop a framework that provides a foundation for finite–radius plate–soil interaction problems well suited to systematic extensions to more general geometries, boundary conditions, and constitutive models.\\

\noindent
{\bf Acknowledgments.} The authors thank Professor Hung-Liang Chen (Department of Civil and Environmental Engineering, West Virginia University) for bringing this problem to our attention and for helpful discussions and clarifications. His willingness to answer detailed technical questions throughout this work is warmly acknowledged.\\

\noindent
{\bf Data Availability.} The data that support the findings of
this study are available from the corresponding author
upon reasonable request.\\

\noindent
{\bf Conflict of Interest Statement.}
The authors declare that they have no conflict of interest.
\printbibliography
\end{document}